\title{\vskip-1.0em A gap theorem for the ZL-amenability constant of a finite group}
\def\shorttitle{ZL-amenability constant of a finite group}
\author{Yemon Choi}
\date{15th May 2015}
\numberwithin{equation}{section}
\renewcommand{\emph}[1]{{\sl #1}\/} 
\newcommand{\st}{\mathbin{\colon}}
\newcommand{\defeq}{:=}
\newcommand{\dt}[1]{{\it#1}\/}
\newenvironment{YCnum}{%
\begin{enumerate}

}{\end{enumerate}\ignorespacesafterend}
\newcommand{\tp}{\mathbin{\otimes}}
\newcommand{\abs}[1]{\vert{#1}\vert}
\newcommand{\norm}[1]{\Vert{#1}\Vert}
\newcommand{\veps}{\varepsilon}
\newcommand{\iso}{\cong}
\newcommand{\Cplx}{\mathbb C}
\newcommand{\Nat}{\mathbb N}
\newcommand{\DZ}{\Delta_{\rm Z}}
\newcommand{\AMZL}{\operatorname{\rm AM}_{\rm Z}}
\newcommand{\ASS}{\operatorname{ass}}
\newcommand{\Irr}{\operatorname{Irr}}
\newcommand{\Conj}{\operatorname{Conj}}
\newcommand{\Aff}{\operatorname{Aff}}
\newcommand{\abar}{\overline{a}}
\newcommand{\gen}[1]{\langle#1\rangle}
\newcommand{\cL}{{\mathcal L}}
\newcommand{\cR}{{\mathcal R}}
\newcommand{\bF}{{\mathbf F}}
\newcommand{\bbI}{{\mathbb I}}
\newcommand{\bbJ}{{\mathbb J}}
\newcommand{\cZL}{\mathcal{ZL}}
\newcommand{\ZCG}{{\rm Z}\Cplx G}
\newcommand{\ZCGG}{{\rm Z}\Cplx (G\times G)}
\newcommand{\ZCH}{{\rm Z}\Cplx H}
\newcounter{pulse}[section]
\numberwithin{pulse}{section}  
\newcommand{\thf}{\sc} 
\theoremstyle{plain}
\newtheorem{thm}[pulse]{\thf Theorem}
\newtheorem{prop}[pulse]{\thf Proposition}
\newtheorem{lem}[pulse]{\thf Lemma}
\newtheorem{cor}[pulse]{\thf Corollary}
\theoremstyle{definition}
\newtheorem{dfn}[pulse]{\thf Definition}
\newtheorem{eg}[pulse]{\thf Example}
\theoremstyle{remark}
\newtheorem{rem}[pulse]{\thf Remark}
\newcommand{\para}[1]{\paragraph{\thf#1}}
\newcounter{quasithm}
\renewcommand{\thequasithm}{\Alph{quasithm}}
\newenvironment{thmlike}[2][\unskip]{\paragraph{{\thf #2 \thequasithm} {\rm#1}.}\refstepcounter{quasithm}\it}{\rm\medskip}
\newcounter{question}
\newenvironment{qn}{\paragraph{\sc Question \thequestion.}\refstepcounter{question}}{\medskip}
\renewcommand{\dt}[1]{\textcolor{Bittersweet}{\textsf{#1}}}
\renewcommand{\para}[1]{\paragraph{#1.}}
\begin{document}

\maketitle

\begin{abstract}
It was shown in \cite{AzSaSp} that the ZL-amenability constant of a finite group is always at least~$1$, with equality if and only if the group is abelian. It was also shown in \cite{AzSaSp} that for any finite non-abelian group this invariant is at least $301/300$, but the proof relies crucially on a deep result of D. A. Rider on norms of central idempotents in group algebras.

Here we show that if $G$ is finite and non-abelian then its ZL-amenability constant is at least $7/4$, which is known to be best possible. We avoid use of Rider's result, by analyzing the cases where $G$ is just non-abelian, using calculations from \cite{ACS_AMZL-2cd}, and establishing a new estimate for groups with trivial centre.

\bigskip
\noindent
{\bf MSC 2010 classification:}
20C15
(primary);
43A20,
43A62
(secondary).

\noindent
{\bf Keywords:}
amenability constant,
character degrees,
just non-abelian groups.
\end{abstract}

\tableofcontents

\vfill\eject

\begin{section}{Introduction}
Given a finite group $G$, consider its complex group algebra $\Cplx G$, and recall that the centre $\ZCG$ is isomorphic to $\bigoplus_p \Cplx p$ where the sum is over all minimal idempotents in $\ZCG$. Now consider
\begin{equation}\label{eq:diagonal of ZC(G)}
\DZ(G) = \sum_p p \otimes p \in \ZCG\tp \ZCG\,.
\end{equation}
$\ZCG\tp\ZCG$ sits naturally inside $\Cplx G\tp\Cplx G \equiv \Cplx(G\times  G)$, and there is a natural $\ell^1$-norm $\norm{\cdot}_1$ that we can put on $\Cplx(G\times G)$. We define the \dt{ZL-amenability constant} of $G$, denoted by $\AMZL(G)$, to be $\norm{\DZ(G)}_1$.
The original reasons for studying $\AMZL(G)$ arose in connection with Banach algebras and (non-abelian) Fourier analysis; however, the problem which we address in this paper can be stated purely in terms of finite groups and the behaviour of their irreducible characters.

$\AMZL(G)$ seems to have first been studied explicitly in work of A.~Azimifard, E. Samei and N. Spronk, although they used different terminology and notation. One of their observations, paraphrased into our notation, is that $\AMZL(G)\geq 1$ with equality if and only if $G$ is abelian; this is proved using the Schur orthogonality relations and a clever use of an ``associated minorant''~$\ASS(G)$. (See Example~\ref{eg:AMZL(abelian)} and Proposition~\ref{p:ASS(non-abelian)} for further details.) The same paper also contains the following  ``gap'' result, which lies significantly deeper.

\begin{thmlike}[(Azimifard--Samei--Spronk, {\cite{AzSaSp}})]{Theorem}
\label{t:ASSgap}
There exists $\delta>0$ such that $\AMZL(G)\geq 1+\delta$ for every finite non-abelian group~$G$.
\end{thmlike}

The purpose of the present paper is to determine \emph{exactly} how big $\delta$ can be.

\begin{rem}
To provide background context, let us briefly mention why the authors of \cite{AzSaSp} wished to have this gap result. If $G$ is a compact group its convolution algebra $L^1(G)$ is an example of an \emph{amenable Banach algebra}; in contrast, it is shown in \cite{AzSaSp} that for many choices of compact group~$G$, the centre of $L^1(G)$ is \emph{not} amenable.\/\footnote{We shall not discuss amenability of Banach algebras in this article, but it has proved to be a fundamental and fruitful notion in certain areas of functional analysis; it may be viewed as a weakened version of ``homological dimension zero'', suitably interpreted.} One source of such examples is given by $G=\prod_{n=1}^\infty G_n$ where each $G_n$ is a finite non-abelian group, and this works by combining Theorem~\ref{t:ASSgap} with a tensor product argument to show that $\AMZL(\prod_{i=1}^n G_i) \geq (1+\delta)^n$ for each~$n$. Thus results for finite groups are used to provide compact groups that generate interesting examples of Banach algebras.
\end{rem}

Now let us return to Theorem~\ref{t:ASSgap}. Examining the original proof given in~\cite{AzSaSp}, one sees that it relies crucially on the following hard result of D.~A.~Rider.

\begin{thmlike}[(Rider; see {\cite[Lemma 5.2]{Rider_idem}})]{Theorem}
\label{t:rider-thm}
Let $K$ be a compact non-abelian group, equipped with a choice\footnotemark\ of Haar measure $\lambda$.
\footnotetext{Strictly speaking, Rider only states and proves this when $\lambda(K)=1$. However, the more general version stated here follows easily from the case $\lambda(K)=1$ by a rescaling argument.}
 Let $f$ be a finite linear combination of irreducible characters of $K$, regarded as an element of the convolution algebra $L^1(K,\lambda)$.
Let
$\norm{f}_1 \defeq \int_K \abs{ f(x) }\,d\lambda(x)$. 
If $f$ is an idempotent in $L^1(K,\lambda)$, then either $\norm{f}_1=1$ or $\norm{f}_1> 1 + 1/300$.
\end{thmlike}

Theorem~\ref{t:ASSgap}, with $\delta=1/300$, follows from Theorem~\ref{t:rider-thm} by taking $K=G\times G$, $\lambda$ to be counting measure and $f=\DZ(G)\in \ell^1(K)$.
Nevertheless, one would hope for a more direct proof of Theorem~\ref{t:ASSgap}, or a better value of $\delta$\/. Rider notes that the value $1/300$ in Theorem~\ref{t:rider-thm} could be improved, but his method does not give any indication of the likely order of magnitude of the optimal constant.\footnote{In some private calculations the present author has been able to get up to $1/80$, but not up to $1/10$.}
Furthermore, this existing proof of Theorem~\ref{t:ASSgap} ignores the fact that $\AMZL(G)$ is the norm of a very special element of $\ZCGG$.

In some recent work of the present author with M. Alaghmandan and E. Samei~\cite{ACS_AMZL-2cd}, the exact ZL-amenability constants were calculated for several families of finite groups. In that work, the smallest value obtained was $7/4$, achieved by the dihedral group of order~$8$, and no smaller value could be found. This demonstrated a large gap between the value of $\delta$ provided by Theorem~\ref{t:rider-thm}, and what one observed in practice.

In this paper we show that the inequality $\AMZL(G)\geq 1+1/300$ is a severe underestimate. Specifically, we prove the following new bound.

\begin{thm}\label{t:mainthm}
Let $G$ be a finite, non-abelian group. Then $\AMZL(G)\geq 7/4$.
\end{thm}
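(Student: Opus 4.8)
The plan is to reduce to the case where $G$ is \emph{just non-abelian} --- non-abelian, but with every proper quotient abelian --- and then to split according to whether $G$ has trivial centre. The reduction uses that $\AMZL$ is monotone under quotients: for $N\trianglelefteq G$ with $H\defeq G/N$, the canonical surjection $q\st\Cplx G\to\Cplx H$ is a contraction for the $\ell^1$-norms and restricts to a surjective unital homomorphism $\ZCG\to\ZCH$ (surjectivity by averaging a preimage over $G$-conjugation). A surjective unital homomorphism between finite products of copies of $\Cplx$ sends each minimal idempotent either to $0$ or to a minimal idempotent, and hits each minimal idempotent of the target exactly once; hence $(q\tp q)(\DZ(G))=\DZ(H)$, and as $q\tp q\st\Cplx(G\times G)\to\Cplx(H\times H)$ is a contraction we get $\AMZL(G/N)\leq\AMZL(G)$. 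Since every non-abelian finite group has a just non-abelian quotient (take $N$ maximal among normal subgroups with non-abelian quotient), it suffices to prove $\AMZL(G)\geq 7/4$ when $G$ is just non-abelian.

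So let $G$ be just non-abelian; then $G'$ is its unique minimal normal subgroup. Suppose first $Z(G)\neq 1$. Then $Z(G)$ contains $G'$, so $G$ has nilpotency class $2$; since every subgroup of the central subgroup $G'$ is normal in $G$, minimality forces $\abs{G'}=p$ for a prime $p$. The commutator map induces a non-degenerate alternating form $G/Z(G)\times G/Z(G)\to G'$ (its radical is trivial, as $[a,b]=1$ for all $b$ iff $a\in Z(G)$), so $G/Z(G)$ is elementary abelian of even rank and $G$ has exactly two distinct irreducible character degrees; moreover every subgroup of $Z(G)$ is normal in $G$, hence contains $G'$, so $Z(G)$ is cyclic of $p$-power order. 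Thus $G$ is a $p$-group with two character degrees, and from the formulae of \cite{ACS_AMZL-2cd} for such groups one reads off $\AMZL(G)\geq 7/4$, attained by $D_8$.

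It remains to handle just non-abelian $G$ with $Z(G)=1$, which is where a genuinely new estimate enters. Rewriting $\DZ(G)$ via $e_\chi(g)=\abs{G}^{-1}\chi(1)\overline{\chi(g)}$ gives
\[ \AMZL(G)=\frac{1}{\abs{G}^{2}}\sum_{g,h\in G}\Bigl|\,\sum_{\chi\in\Irr(G)}\chi(1)^{2}\chi(g)\chi(h)\,\Bigr|. \]
Since $\sum_{g}\abs{\chi(g)}^{2}=\abs{G}$, the terms with $h=g^{-1}$ are non-negative and sum to $\abs{G}^{2}$, so $\AMZL(G)=1+\abs{G}^{-2}\sum_{h\neq g^{-1}}\bigl|\sum_\chi\chi(1)^{2}\chi(g)\chi(h)\bigr|$ and the task is to produce a further $\tfrac34$ worth of $\ell^1$-mass from the pairs with $h\neq g^{-1}$. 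The hypothesis $Z(G)=1$ is precisely what makes this possible: it says $\{1\}$ is the only conjugacy class of size $1$, which both stops the relevant character sums from cancelling and keeps certain auxiliary subsets of $G\times G$ essentially disjoint (their overlaps lie over the set of involutions, which $Z(G)=1$ lets one control). Carrying this through --- perhaps after separating the cases $G'$ elementary abelian (as for $S_3$, $A_4$ and affine-type groups) and $G'$ a direct power of a non-abelian simple group (as for $S_n$ with $n\geq 5$ and related almost simple groups) --- should give a uniform bound comfortably above $7/4$; a crude estimate such as $\AMZL(G)\geq 2$ would already suffice.

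The crux, and the expected main obstacle, is this last step. Steps one and two are structural bookkeeping together with an appeal to \cite{ACS_AMZL-2cd}, but the centreless just non-abelian groups form a large and varied family that one uniform inequality must cover, and the delicate part is ruling out cancellation in $\sum_\chi\chi(1)^{2}\chi(g)\chi(h)$ for arbitrary $g,h$ --- equivalently, showing that enough $\ell^1$-mass survives off the anti-diagonal. Because the extremal value $7/4$ occurs only for groups with non-trivial centre (such as $D_8$), the centreless bound has considerable slack, so the efficient route is to settle for a crude but uniform estimate rather than a sharp one.
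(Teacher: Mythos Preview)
Your reduction to just non-abelian groups and your treatment of the nontrivial-centre case are essentially the paper's approach (modulo a small slip: cyclicity of $Z(G)$ does not by itself force $G$ to be a $p$-group --- one uses the Sylow factorisation of nilpotent groups, as the paper does). The genuine gap is in the trivial-centre case, where you have correctly located the difficulty but not supplied a method.

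Your proposed decomposition --- strip off the pairs $(g,g^{-1})$, whose total contribution is exactly $1$, and hunt for a further $3/4$ among the remaining pairs --- is \emph{not} what the paper does, and it is not clear how to make it work: the off-anti-diagonal sums $\sum_\chi\chi(1)^2\chi(g)\chi(h)$ can individually vanish, and you offer no mechanism to prevent widespread cancellation. The paper instead works in the conjugacy-class form of $\AMZL$ and restricts to the \emph{class}-diagonal $D=C$, obtaining the minorant
\[
\ASS(G)=\frac{1}{|G|^2}\sum_{C\in\Conj(G)}\sum_{\phi\in\Irr(G)}|C|^2\,\phi(e)^2\,|\phi(C)|^2.
\]
This differs from your $(g,g^{-1})$ sum by a factor of $|C|^2$ versus $|C|$, and that extra factor is exactly what carries the argument. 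Writing $A_{\phi,C}=|G|^{-1}|C|\,|\phi(C)|^2$ (a doubly stochastic matrix) and $\mu=\max_\phi A_{\phi,e}$, the paper shows
\[
\ASS(G)-1\ \ge\ (s-1)\sum_\phi A_{\phi,e}(1-A_{\phi,e})\ \ge\ (s-1)\max(2\mu,1)(1-\mu),
\]
where $s$ is the smallest nontrivial class size; a separate short lemma gives $(1+s)(1-\mu)\ge 1$, and combining yields $\ASS(G)-1\ge\frac{s-1}{s+1}\cdot\frac{2s}{s+1}$. For $s\ge 3$ this is $\ge 3/4$, done. For $s=2$ it only gives $4/9$, so the paper handles that case separately: a centreless JNA group with a conjugacy class of size $2$ is shown to be dihedral of order $2p$ for an odd prime $p$, and then $\AMZL$ is computed explicitly ($\ge 7/3$). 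Note that $\ASS(S_3)=5/3<7/4$, so no argument via the class-diagonal minorant alone can cover all centreless JNA groups uniformly --- the $s=2$ case genuinely requires a different treatment, which your proposal does not anticipate.
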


\begin{rem}
As previously noted, the ${\rm ZL}$-amenability constant of the dihedral group of order $8$ is exactly $7/4$. So the bound in Theorem~\ref{t:mainthm} is best possible.
Furthermore, we will see later in Example~\ref{eg:when AMZL=7/4} that there is an infinite sequence $(G_n)$ of finite $2$-groups, each one indecomposable as a direct product of smaller factors, such that $\AMZL(G_n)=7/4$ for all~$n$.
\end{rem}

We wish to immediately highlight two features of our approach to Theorem~\ref{t:mainthm}. Firstly, we completely bypass Rider's result; and secondly, we avoid any difficult structure theory for finite groups. In fact, all the group theory and character theory we need can be found in introductory texts. We need some results from~\cite{ACS_AMZL-2cd}, but the same comments apply there.

\subsection*{Structure and style of the paper}
In Section~\ref{s:prelim} we give a rapid definition of $\AMZL(G)$ and establish some basic properties that will be needed. These properties were already established in \cite{AzSaSp}, using a Banach-algebraic perspective; here we shall give complete proofs that only need Schur orthogonality for finite groups, so that the reader does not need to take results from \cite{AzSaSp} on trust.

Also, following an idea used by Azimifard, Samei and Spronk in their proof of Theorem~\ref{t:ASSgap}, we introduce an auxiliary invariant $\ASS(G)$ which is a lower bound for $\AMZL(G)$ and is easier to estimate from below.
In Section~\ref{s:trivial-centre} we obtain a new lower bound on $\ASS(G)$ for all groups with trivial centre (excluding the degenerate case of the $1$-element group). This is perhaps the main conceptually new idea in this paper that was missing from \cite{ACS_AMZL-2cd} and~\cite{AzSaSp}. Nevertheless, to get from here to the optimal constant in Theorem~\ref{t:mainthm} requires another ingredient, and that is to reduce the problem to the case of finite groups that are ``just non-abelian''. The precise statement is Theorem~\ref{t:bound for JNA cases}: the necessary definitions, and the proof of that theorem, take up all of Section~\ref{s:JNA cases}. We also need two technical results on just non-abelian groups (Lemma \ref{l:baby JN2} and Theorem~\ref{t:is-dihedral}): both can be extracted from the existing literature, but for the reader's convenience we provide full proofs in the appendix.

These sections, together with the Appendix, provide a complete proof of Theorem~\ref{t:mainthm}. Figure~\ref{fig:leitfaden} shows how the key results are assembled in this proof.
\begin{figure}[htpb]
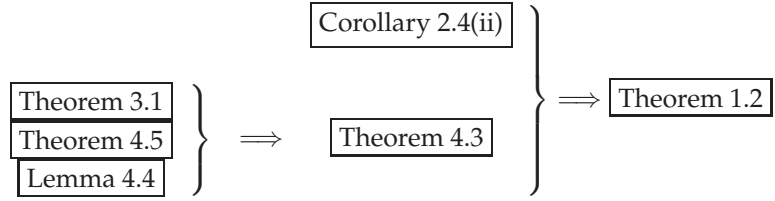

\[ \left. \begin{array}{lcc}
 & &  \fbox{Corollary \ref{c:AMZL-of-prod-and-quot}\ref{li:quot}}  \\
 & & \\
 \left.   \begin{tabular}{c}
 \fbox{Theorem~\ref{t:bound for trivial centre}} \\
 \fbox{Theorem~\ref{t:is-dihedral}} \\
  \fbox{Lemma~\ref{l:baby JN2}}
\end{tabular} \right\}  & \Longrightarrow
& \fbox{Theorem~\ref{t:bound for JNA cases}} 
\end{array} \right\} \Longrightarrow
\fbox{Theorem~\ref{t:mainthm}}
\]
\caption{Results needed to prove the main result}
\label{fig:leitfaden}
\end{figure}
For sake of completeness and interest: in Section \ref{s:ASS(perfect)} we use similar but easier ideas to those of Section~\ref{s:trivial-centre} to obtain a lower bound on $\ASS(G)$ for all perfect groups $G$; and in Section~\ref{s:closing} we list several natural questions that we have not yet resolved, concerning the set of possible values of $\AMZL(\cdot)$.

The style of this paper is intended to be somewhat expository. The aim, both here and in the earlier paper~\cite{ACS_AMZL-2cd}, has been to make the proofs accessible to researchers in abstract harmonic analysis who are interested in amenability constants of various Banach algebras, but who may not have much exposure to structure theory for finite groups. Therefore, the proofs are spelled out in some detail, and we have avoided more specialized results or techniques from finite group theory. On the other hand, this paper does not presume any knowledge of (or interest in) abstract harmonic analysis; so finite group theorists may find Theorem~\ref{t:mainthm} and its proof of some interest, and perhaps take the results obtained en route as a challenge to find sharper versions or better estimates.

\end{section}

\begin{section}{Preliminaries}\label{s:prelim}

\begin{subsection}{Notation and terminology}
We assume familiarity with basic character theory of finite groups over the complex field~$\Cplx$, up to the Schur orthogonality relations.
 All of this can be found in Isaacs's book \cite{Isaacs_CTbook}; alternatively, see \cite[Chapter 7]{Cohn_ed2vol2} or \cite{JamLie}.

 Given a finite group $G$, we denote by $\Conj(G)$ the set of conjugacy classes of $G$, and we write $\Irr(G)$ for the set of irreducible characters of~$G$ (working over $\Cplx$). The \dt{degree} of a character $\phi$ is just $\phi(e)$ (which equals the dimension of any representation affording that character) and we say that $\phi$ is \dt{linear} if $\phi(e)=1$.
If $C\in\Conj(G)$ and $\phi$ is a character on $G$, we write $\phi(C)$ for the value of $\phi$ on any element of $C$; thus $|\phi(C)|\leq\phi(e)$.

The element $\DZ(G)$ mentioned in the introduction is the \dt{diagonal element} or \dt{separability idempotent} for the commutative $\Cplx$-algebra $Z(\Cplx G)$. For the definition of these terms for general algebras over a field, see e.g.~\cite[Section 6.7]{Cohn_ed2vol3}; further explanation in the special case of the centre of a group algebra, and discussion of the connection with amenability for Banach algebras, can be found in \cite[Section~2.1]{ACS_AMZL-2cd}.

To keep our presentation self-contained, we will not use general properties of separability idempotents, and instead adopt a ``hands-on'' definition. First some terminology: if $\bbI$ is a non-empty finite indexing set, then by the ``natural'' $\ell^1$-norm on the vector space $\Cplx^\bbI$ we simply mean $\norm{a}_1 \equiv \norm{a}_{\ell^1(\bbI)} \defeq \sum_{i\in\bbI} |a_i|$.

\begin{dfn}\label{d:define diagonal}
Let $G$ be a finite group. If $\phi\in\Irr(G)$, we may regard $\phi\tp\phi$ as an element of $\Cplx (G\times G)$ (since the underlying vector space of $\Cplx (G\times G)$ is $\Cplx^{G\times G}$).
Let
\[
\DZ(G) \defeq  \sum_{\phi\in\Irr(G)} \frac{\phi(e)}{|G|}\phi \tp  \frac{\phi(e)}{|G|}\phi \in \Cplx(G\times G)\,.
\]
Then, equipping $\Cplx^{G\times G}$ with its natural $\ell^1$-norm, define the \dt{ZL-amenability constant of~$G$} to be the $\ell^1$-norm of $\DZ(G)$.
\end{dfn}

We could of course define $\AMZL(G)$ without explicitly naming $\DZ(G)$, but  some hereditary properties of $\AMZL$ with respect to taking products and quotients of groups are best understood in terms of corresponding properties of $\DZ$. This will be done in the next section.
\end{subsection}

\begin{subsection}{Hereditary properties of the ZL-amenability constant}
The next two lemmas are known results, if one uses the definition of $\DZ(G)$ that is given in \cite{AzSaSp} and~\cite{ACS_AMZL-2cd}. We shall give character-theoretic proofs that just use Definition~\ref{d:define diagonal}.

\begin{lem}\label{l:diagonal of group product}
Let $G$ and $H$ be finite groups. If we identify $\Cplx(G\times H\times G\times H)$ with $\Cplx(G\times G) \tp \Cplx(H\times H)$ in the natural way, then $\DZ(G\times H)$ is identified with $\DZ(G)\tp \DZ(H)$.
\end{lem}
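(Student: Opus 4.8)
The plan is to reduce everything to the standard classification of the irreducible characters of a direct product. Recall (see e.g.\ \cite{Isaacs_CTbook}) that
\[ \Irr(G\times H) = \{\,\phi\times\psi \st \phi\in\Irr(G),\ \psi\in\Irr(H)\,\}, \]
where $(\phi\times\psi)(g,h)\defeq\phi(g)\psi(h)$; moreover this parametrization is a bijection, $(\phi\times\psi)(e,e)=\phi(e)\psi(e)$, and $\abs{G\times H}=\abs{G}\,\abs{H}$. First I would substitute these facts into Definition~\ref{d:define diagonal}, obtaining
\[ \DZ(G\times H) = \sum_{\phi\in\Irr(G)}\ \sum_{\psi\in\Irr(H)} \frac{\phi(e)\psi(e)}{\abs{G}\,\abs{H}}(\phi\times\psi)\tp\frac{\phi(e)\psi(e)}{\abs{G}\,\abs{H}}(\phi\times\psi), \]
an element of $\Cplx(G\times H)\tp\Cplx(G\times H)\equiv\Cplx(G\times H\times G\times H)$.

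Next I would unwind the ``natural'' identification $\Cplx(G\times H\times G\times H)\equiv\Cplx(G\times G)\tp\Cplx(H\times H)$: writing a typical point as $(g_1,h_1,g_2,h_2)$, this is the identification induced by the coordinate permutation $(g_1,h_1,g_2,h_2)\mapsto\bigl((g_1,g_2),(h_1,h_2)\bigr)$. Under it, the function $(\phi\times\psi)\tp(\phi\times\psi)$, which sends $(g_1,h_1,g_2,h_2)$ to $\phi(g_1)\psi(h_1)\phi(g_2)\psi(h_2)$, is carried to $(\phi\tp\phi)\tp(\psi\tp\psi)\in\Cplx(G\times G)\tp\Cplx(H\times H)$. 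Factoring the scalar coefficient as $\bigl(\tfrac{\phi(e)}{\abs{G}}\bigr)^2\bigl(\tfrac{\psi(e)}{\abs{H}}\bigr)^2$ and using bilinearity of $\tp$, the double sum above becomes
\[ \sum_{\phi\in\Irr(G)}\ \sum_{\psi\in\Irr(H)} \Bigl(\tfrac{\phi(e)}{\abs{G}}\phi\tp\tfrac{\phi(e)}{\abs{G}}\phi\Bigr)\tp\Bigl(\tfrac{\psi(e)}{\abs{H}}\psi\tp\tfrac{\psi(e)}{\abs{H}}\psi\Bigr). \]
Since the index set is a Cartesian product and $\tp$ is bilinear, this factors as $\DZ(G)\tp\DZ(H)$, which is the desired conclusion.

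I do not expect any serious obstacle here: the entire content is the (standard) description of $\Irr(G\times H)$ as external products, together with elementary bilinearity. The only point that demands care is purely a matter of bookkeeping — keeping track of the coordinate permutation hidden inside the identification of $\Cplx(G\times H\times G\times H)$ with $\Cplx(G\times G)\tp\Cplx(H\times H)$, so that the two ``$G$-slots'' and the two ``$H$-slots'' are grouped correctly before one appeals to bilinearity of the tensor product.
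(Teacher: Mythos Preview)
Your proposal is correct and follows exactly the approach of the paper, which simply notes that the result is a straightforward calculation using $\Irr(G\times H)=\{\phi\tp\psi : \phi\in\Irr(G),\ \psi\in\Irr(H)\}$ and $(\phi\tp\psi)(e_{G\times H})=\phi(e_G)\psi(e_H)$. You have merely written out the bookkeeping that the paper leaves implicit.
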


\begin{proof}
This is a straightforward calculation using $\Irr(G\times H)=\{ \phi\tp\psi \st \phi\in\Irr(G), \psi\in\Irr(H)\}$ and $(\phi\tp\psi)(e_{G\times H}) = \phi(e_G)\psi(e_H)$.
\end{proof}

\begin{lem}\label{l:diagonal of quotient group}
Let $G$ and $H$ be finite groups, and suppose $q:G\to H$ is a surjective group homomorphism. Extend $q$ to an algebra homomorphism $Q:\Cplx G \to \Cplx H$ in the natural way. Then $(Q\tp Q)(\DZ(G))=\DZ(H)$.
\end{lem}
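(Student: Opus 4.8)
The idea is to express both sides in terms of irreducible characters and use the standard correspondence between $\Irr(H)$ and the irreducible characters of $G$ that are trivial on $N := \ker q$. First I would recall that if $\phi \in \Cplx G$ is written as a function $\phi = \sum_{g \in G} \phi(g)\, g$, then $Q(\phi) = \sum_{g \in G} \phi(g)\, q(g) = \sum_{h \in H} \bigl( \sum_{g \in q^{-1}(h)} \phi(g) \bigr) h$; that is, $Q$ is ``summation along fibres'' of $q$. Applying this to a character: if $\psi \in \Irr(H)$ and we inflate it to $\widetilde\psi := \psi \circ q \in \Irr(G)$, then $\widetilde\psi$ is constant on each fibre $q^{-1}(h)$ with value $\psi(h)$, and each fibre is a coset of $N$ with $|N| = |G|/|H|$ elements, so $Q(\widetilde\psi) = |N|\, \psi = (|G|/|H|)\, \psi$. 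Conversely, every $\chi \in \Irr(G)$ with $N \subseteq \ker\chi$ arises this way, and for $\chi \in \Irr(G)$ with $N \not\subseteq \ker\chi$ one has $\sum_{g \in q^{-1}(h)} \chi(g) = 0$ for every $h$ (since $\sum_{n \in N}\chi(xn) = 0$ when $\chi|_N$ is not a multiple of the trivial character, by column orthogonality applied in $G$, or equivalently because $\sum_{n\in N} n$ acts as $0$ in the relevant matrix block), hence $Q(\chi) = 0$.

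Next I would apply $Q \tp Q$ termwise to the defining sum $\DZ(G) = \sum_{\chi \in \Irr(G)} \tfrac{\chi(e)}{|G|}\chi \tp \tfrac{\chi(e)}{|G|}\chi$. By the previous paragraph, the terms with $N \not\subseteq \ker\chi$ die, and for $\chi = \widetilde\psi$ with $\psi \in \Irr(H)$ we have $\chi(e) = \psi(e_H)$ and $(Q\tp Q)\bigl( \tfrac{\chi(e)}{|G|}\chi \tp \tfrac{\chi(e)}{|G|}\chi \bigr) = \tfrac{\psi(e_H)}{|G|}\tfrac{|G|}{|H|}\psi \tp \tfrac{\psi(e_H)}{|G|}\tfrac{|G|}{|H|}\psi = \tfrac{\psi(e_H)}{|H|}\psi \tp \tfrac{\psi(e_H)}{|H|}\psi$. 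Summing over $\psi \in \Irr(H)$ recovers exactly $\DZ(H)$, as required.

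The only genuine point requiring care is the vanishing claim $Q(\chi) = 0$ when $N \not\subseteq \ker\chi$, i.e.\ that the fibre-sums of such a $\chi$ are all zero; this is where I expect a reader might want detail. It follows quickly from the orthogonality relations: restricting $\chi$ to $N$, the inner product $\langle \chi|_N, 1_N\rangle = \tfrac{1}{|N|}\sum_{n\in N}\chi(n)$ counts the multiplicity of the trivial $N$-character in $\chi|_N$, and more generally $\sum_{n \in N}\chi(xn) = \chi(x) \cdot (\text{something})$ — cleanest is to note that $p := \tfrac{1}{|N|}\sum_{n\in N} n$ is a central idempotent in $\Cplx N$, and in the representation affording $\chi$ it acts as projection onto the $N$-fixed vectors; if $N \not\subseteq \ker\chi$ and $\chi$ is irreducible then by Clifford theory (or directly, since the fixed subspace is $G$-invariant) this projection is zero, so $\sum_{n \in N}\chi(xn) = |N|\,\chi(xp\,\cdot) = 0$ for all $x$. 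Hence $Q(\chi) = \sum_h \bigl(\sum_{g\in q^{-1}(h)}\chi(g)\bigr) h = 0$. (Alternatively, one can simply invoke the standard bijection $\Irr(H) \leftrightarrow \{\chi \in \Irr(G) : N \subseteq \ker\chi\}$ and the fact that $Q$ annihilates the sum of any non-trivial $N$-coset's worth of a character orthogonal to the inflations, citing \cite{Isaacs_CTbook}.) Everything else is bookkeeping.
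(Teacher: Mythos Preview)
Your proposal is correct and follows the same overall decomposition as the paper: split $\Irr(G)$ into the inflations $q^*(\Irr(H))$ and the complement, show the inflations map to the right thing under $Q$, and show the complement is killed by~$Q$. The bookkeeping for the inflated characters is identical to the paper's.

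The one genuine difference is in the vanishing step. You argue directly at the level of representations: the idempotent $p=\tfrac{1}{|N|}\sum_{n\in N} n$ acts on the module affording $\chi$ as projection onto the $N$-fixed vectors, this subspace is $G$-invariant since $N\lhd G$, and irreducibility forces it to be zero when $N\not\subseteq\ker\chi$; hence every fibre-sum $\sum_{n\in N}\chi(xn)$ vanishes and $Q(\chi)=0$. The paper instead argues indirectly and purely in terms of characters: it first checks that $Q(\ZCG)\subseteq\ZCH$, then shows $Q(\theta)$ is orthogonal to every $\phi\in\Irr(H)$ by rewriting $\sum_{y\in H} Q(\theta)(y)\overline{\phi(y)}$ as $\sum_{x\in G}\theta(x)\overline{(\phi\circ q)(x)}$ and invoking Schur row orthogonality in~$G$. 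Your route is shorter and more conceptual; the paper's route is in keeping with its stated aim of using only the Schur relations and avoiding any appeal to the underlying representations. Both are standard and either would be acceptable here.
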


\begin{proof}
The map $\phi\mapsto \phi\circ q$ defines an injection $q^*: \Irr(H)\hookrightarrow \Irr(G)$.
Now
\[ Q\left(\sum_{y\in H} \sum_{x\in q^{-1}(y)} a_x\delta_x\right) = \sum_{y\in H} \left(\sum_{x\in q^{-1}(y)} a_x\right) \delta_y \qquad(a\in\Cplx G).\]
In particular, for each $\phi\in\Irr(H)$,
\[ Q\left( \frac{q^*(\phi)(e_G)}{|G|} q^*(\phi)\right) = \frac{\phi(e_H)}{|G|} |\ker(q)|  \phi = \frac{\phi(e_H)}{|H|} \phi\;. 
 \]

Let $\cR$ denote the set $\Irr(G)\setminus q^*(\Irr(H))$. Then
\[ Q\left( \sum_{\psi \in\Irr(G)\setminus \cR} \frac{\psi(e_G)}{|G|}\psi \tp   \frac{\psi(e_G)}{|G|}\psi \right) = \DZ(H)\,; \]
so to complete the proof, it suffices to show that $\cR\subseteq \ker(Q)$.

Our argument is indirect. Given $f\in\ZCG$, for each $y\in H$ we pick $x\in q^{-1}(y)$ and note that
\[ \delta_y * Q(f) = Q(\delta_x* f) = Q(f*\delta_x) = Q(f)* \delta_y\,.\]
Hence $Q(\ZCG)\subseteq \ZCH$. Since $\Irr(H)$ spans $\ZCH$, it now suffices to prove that
\[ \sum_{y\in H} Q(\theta)(y)\ \overline{\phi(y)} = 0 \qquad\text{for all $\theta\in \cR$ and all $\phi\in\Irr(H)$.} \]
Equivalently, it suffices to prove that for each $\theta\in \cR$ and each $\phi\in \Irr(H)$,
\[ 0 = \sum_{y\in H} \left( \sum_{x\in q^{-1}(y)} \theta(x) \right) \overline{\phi(y)} = \sum_{x\in G} \theta(x)\overline{(\phi\circ q)(x)}\,. \]
But this is immediate from Schur orthogonality for $\Irr(G)$.
\end{proof}

\begin{cor}[${\rm ZL}$-amenability constants of products and quotients]
\label{c:AMZL-of-prod-and-quot}\
\begin{YCnum}
\item\label{li:prod}
 Let $G_1$ and $G_2$ be finite groups. Then $\AMZL(G_1\times G_2)=\AMZL(G_1)\AMZL(G_2)$.
\item\label{li:quot}
 Let $G$ be a finite group and let $N \unlhd G$. Then $\AMZL(G/N)\leq \AMZL(G)$.
\end{YCnum}
\end{cor}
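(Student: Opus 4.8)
The plan is to deduce both parts directly from Lemmas~\ref{l:diagonal of group product} and~\ref{l:diagonal of quotient group}, together with two elementary facts about the natural $\ell^1$-norm. The first fact is that the $\ell^1$-norm is multiplicative on elementary tensors: under the identification $\Cplx^{\bbI\times\bbJ}\equiv\Cplx^\bbI\tp\Cplx^\bbJ$ we have $\norm{a\tp b}_1 = \norm{a}_1\norm{b}_1$, since $\sum_{i,j}\abs{a_ib_j} = \bigl(\sum_i\abs{a_i}\bigr)\bigl(\sum_j\abs{b_j}\bigr)$. The second fact is that whenever $p\st\bI\to\bbJ$ is a surjection of finite index sets, the induced ``pushforward'' map $\Cplx^{\bI}\to\Cplx^{\bbJ}$, $a\mapsto\bigl(\sum_{i\in p^{-1}(j)}a_i\bigr)_j$, is an $\ell^1$-contraction, by the triangle inequality.

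For part~\ref{li:prod}, I would invoke Lemma~\ref{l:diagonal of group product}: under the natural identification of $\Cplx(G_1\times G_2\times G_1\times G_2)$ with $\Cplx(G_1\times G_1)\tp\Cplx(G_2\times G_2)$, the element $\DZ(G_1\times G_2)$ corresponds to $\DZ(G_1)\tp\DZ(G_2)$, and this identification is isometric for the natural $\ell^1$-norms on the two spaces. Applying the first fact then gives
\[ \AMZL(G_1\times G_2) = \norm{\DZ(G_1)\tp\DZ(G_2)}_1 = \norm{\DZ(G_1)}_1\norm{\DZ(G_2)}_1 = \AMZL(G_1)\AMZL(G_2). \]
For part~\ref{li:quot}, set $H=G/N$, let $q\st G\to H$ be the quotient map, and extend it to $Q\st\Cplx G\to\Cplx H$ as in Lemma~\ref{l:diagonal of quotient group}. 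Since $Q$ is exactly the pushforward along the surjection $q$, the second fact shows $\norm{Q(a)}_1\leq\norm{a}_1$ for all $a\in\Cplx G$; applying the same observation to the surjection $q\times q\st G\times G\to H\times H$ shows that $Q\tp Q\st\Cplx(G\times G)\to\Cplx(H\times H)$ is also $\ell^1$-contractive. Combining this with the identity $(Q\tp Q)(\DZ(G))=\DZ(H)$ from Lemma~\ref{l:diagonal of quotient group},
\[ \AMZL(G/N) = \norm{\DZ(H)}_1 = \norm{(Q\tp Q)(\DZ(G))}_1 \leq \norm{\DZ(G)}_1 = \AMZL(G). \]

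There is no genuine obstacle here once the two lemmas are in hand; the corollary is really just the observation that $\DZ$ transforms well under products and quotients and that the $\ell^1$-norm is well behaved under the corresponding operations. The one point worth stating carefully is that the ``natural'' $\ell^1$-norm on $\Cplx^{\bbI\times\bbJ}$ agrees, on elementary tensors, with the product of the $\ell^1$-norms on the factors — and that is all that part~\ref{li:prod} requires, so I would not dwell on the (true but unneeded) fact that this norm coincides with the projective tensor norm on all of $\Cplx^\bbI\tp\Cplx^\bbJ$.
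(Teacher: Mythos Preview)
Your proof is correct and follows essentially the same route as the paper: both parts are deduced from Lemmas~\ref{l:diagonal of group product} and~\ref{l:diagonal of quotient group} together with the multiplicativity of the $\ell^1$-norm on elementary tensors and the fact that $Q$ (and hence $Q\tp Q$) is $\ell^1$-contractive. Your write-up is slightly more explicit about \emph{why} the pushforward is contractive, but the structure and ingredients are identical.
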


\begin{proof}
For any finite indexing sets $\bbI$ and $\bbJ$, the $\ell^1$-norm has the following property: for each $a\in \Cplx^\bbI$ and $b\in \Cplx^\bbJ$\/,
we have $\norm{a \tp b}_{\ell^1(\bbI\times\bbJ)} = \norm{a}_{\ell^1(\bbI)}\norm{b}_{\ell^1(\bbJ)}$\/.
Now, using Lemma \ref{l:diagonal of group product}, part~\ref{li:prod} follows.

To prove part~\ref{li:quot}, note that $Q:\Cplx G \to \Cplx H$ does not increase the $\ell^1$-norm of elements. The same is true for $Q\tp Q:\Cplx(G\times G) \to \Cplx(H\times H)$. Therefore $\norm {(Q\tp Q)(\DZ(G))}_1\leq \norm{\DZ(G)}_1$, and applying Lemma~\ref{l:diagonal of quotient group} does the rest.
\end{proof}

\begin{rem}\label{r:retread}
Both parts of Corollary~\ref{c:AMZL-of-prod-and-quot}
 were already proved in \cite[\S1]{AzSaSp}, using the abstract characterization of $\DZ(G)$ as the unique ``diagonal element'' or ``separability idempotent'' for the algebra $\ZCG$. Our approach has the benefit of just using Definition~\ref{d:define diagonal} and Schur orthogonality for finite groups, but the approach in \cite{AzSaSp} works in a somewhat broader context.
\end{rem}

\end{subsection}

\begin{subsection}{An explicit formula and some comments}
In \cite[Theorem 1.8]{AzSaSp} the authors give the following concrete formula for the ZL-amenability constant of a finite group:
\begin{equation}\label{eq:define AMZL}
\AMZL(G) = \frac{1}{|G|^2} \sum_{C\in\Conj(G)}\sum_{D\in\Conj(G)} |C| |D| \left\vert \sum_{\phi\in\Irr(G)} \phi(e)^2 \phi(C)\overline{\phi(D)}\right\vert\;.
\end{equation}
 We could have taken this as the \emph{definition} of $\AMZL(G)$, if we wished to avoid discussing $\DZ(G)$ and $\ell^1$-norms, but then the proof of Corollary~\ref{c:AMZL-of-prod-and-quot} would become even more opaque.

\begin{rem}
The presence of complex conjugates in Equation~\eqref{eq:define AMZL} may need some explanation. First, note that the same expression without any complex conjugates follows immediately from Definition~\ref{d:define diagonal} and the definition of the $\ell^1$-norm on $\Cplx(G\times G)$. Then observe that there is a bijection on $\Conj(G)$ which exchanges the conjugacy class of $x$ with that of $x^{-1}$; and if we denote this bijection by $D\longleftrightarrow D^{-1}$, then $|D^{-1}|\phi(D^{-1}) = |D|\overline{\phi(D)}$ for each $D\in\Conj(G)$ and each character $\phi$.
\end{rem}

\begin{eg}[ZL-amenability constants of finite abelian groups]
\label{eg:AMZL(abelian)}
Let $G$ be a finite abelian group. It was noted in \cite{AzSaSp} that in this case
$\DZ(G) = |G|^{-1} \sum_{x\in G} \delta_x \tp \delta_{x^{-1}}$, which clearly has $\ell^1$-norm exactly~$1$. (See the remarks in \cite[Section 2.1]{ACS_AMZL-2cd} for further explanation of this identity.) Here we give a more direct proof. 
By Equation~\eqref{eq:define AMZL}
\[ \AMZL(G)
 = \frac{1}{|G|^2}\sum_{x,y\in G} \left\vert
\sum_{\phi\in \widehat{G}} \phi(x)\overline{\phi(y)} \right\vert \,.
\]
By Schur (column) orthogonality, most terms in this sum vanish and summing the remaining ones gives
\[ \AMZL(G) = |G|^{-2} \sum_{x\in G} \sum_{\phi\in\widehat{G}} 1 = 1, \]
as required.
\end{eg}

In general it seems hard to get good \emph{lower} bounds on the expression in \eqref{eq:define AMZL}, since there could be considerable cancellation when we sum over~$\phi$, as in Example~\ref{eg:AMZL(abelian)}. A key idea in \cite{AzSaSp} is to introduce a minorant for $\AMZL(G)$ that is easier to estimate well from below. The authors of \cite{AzSaSp} do not give this invariant a name, but since it is the main object of study in Section~\ref{s:trivial-centre} it deserves some {\it ad hoc}\/ notation.

\begin{dfn}[An auxiliary minorant]\label{d:ASS}
For $G$ a finite group, we define
\begin{equation}
\ASS(G) = \frac{1}{|G|^2} \sum_{C\in\Conj(G)}\sum_{\phi\in\Irr(G)} |C|^2 |\phi(C)|^2 \phi(e)^2\ .
\end{equation}
\end{dfn}

\begin{prop}[Azimifard--Samei--Spronk, \cite{AzSaSp}]
\label{p:ASS(non-abelian)}
$\AMZL(G)\geq \ASS(G)\geq 1$ for every finite group~$G$. Moreover, if $G$ is non-abelian then $\ASS(G)>1$.
\end{prop}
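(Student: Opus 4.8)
The plan is to establish the two inequalities separately, using nothing beyond the two Schur orthogonality relations and the explicit formula~\eqref{eq:define AMZL}. For $\AMZL(G)\geq\ASS(G)$, note that~\eqref{eq:define AMZL} presents $\AMZL(G)$ as $\abs{G}^{-2}$ times a sum, over ordered pairs $(C,D)\in\Conj(G)^2$, of the manifestly nonnegative quantities $\abs{C}\,\abs{D}\,\bigl\vert\sum_\phi\phi(e)^2\phi(C)\overline{\phi(D)}\bigr\vert$. I would discard all the terms with $D\neq C$. On the diagonal $D=C$ we have $\sum_\phi\phi(e)^2\phi(C)\overline{\phi(C)}=\sum_\phi\phi(e)^2\abs{\phi(C)}^2\geq 0$, so the modulus signs there may be dropped, and the diagonal contribution is exactly $\ASS(G)$ as written in Definition~\ref{d:ASS}.

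For $\ASS(G)\geq 1$ the key elementary observation is that $\phi(e)^2\geq 1$ for every $\phi\in\Irr(G)$, so $\sum_\phi\phi(e)^2\abs{\phi(C)}^2\geq\sum_\phi\abs{\phi(C)}^2$; and column orthogonality identifies the latter sum with $\abs{C_G(x)}=\abs{G}/\abs{C}$ for any $x\in C$. Feeding this into Definition~\ref{d:ASS} gives
\[
\ASS(G)\ \geq\ \frac{1}{\abs{G}^2}\sum_{C\in\Conj(G)}\abs{C}^2\cdot\frac{\abs{G}}{\abs{C}}\ =\ \frac{1}{\abs{G}}\sum_{C\in\Conj(G)}\abs{C}\ =\ 1.
\]

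To get strictness when $G$ is non-abelian, I would isolate the contribution of the identity class $C=\{e\}$ in $\ASS(G)$: since $\abs{\{e\}}=1$ and $\phi(\{e\})=\phi(e)$, that contribution equals $\abs{G}^{-2}\sum_\phi\phi(e)^4$, whereas in the estimate above it was replaced by the smaller-looking $\abs{G}^{-2}\sum_\phi\phi(e)^2=\abs{G}^{-1}$ (using the degree formula $\sum_\phi\phi(e)^2=\abs{G}$, itself the $C=D=\{e\}$ case of column orthogonality). A non-abelian $G$ carries some $\psi\in\Irr(G)$ with $\psi(e)\geq 2$, for which $\psi(e)^4>\psi(e)^2$ strictly, while $\phi(e)^4\geq\phi(e)^2$ for every other $\phi$; hence the $C=\{e\}$ summand strictly exceeds the value used for it, forcing $\ASS(G)>1$. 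I do not anticipate a genuine obstacle here --- the argument is short and self-contained --- and the only points needing care are bookkeeping ones: matching the correct (row versus column) orthogonality relation to each step, and, in reading off the diagonal of~\eqref{eq:define AMZL}, keeping track of the identities $\phi(D^{-1})=\overline{\phi(D)}$ and $\abs{D^{-1}}=\abs{D}$ that underlie the conjugated form of that formula.
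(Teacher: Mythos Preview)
Your argument is correct, and for $\AMZL(G)\geq\ASS(G)$ it matches the paper exactly. For $\ASS(G)\geq 1$ and the strictness claim, you and the paper take dual routes. The paper discards information on the conjugacy-class side, using $|C|^2\geq |C|$, then applies row orthogonality $\sum_C |G|^{-1}|C|\,|\phi(C)|^2=1$ followed by the degree formula; strictness comes from a class $C$ with $|C|>1$ and some $\phi(C)\neq 0$. You instead discard information on the character-degree side, using $\phi(e)^2\geq 1$, then apply column orthogonality $\sum_\phi |\phi(C)|^2=|G|/|C|$ followed by the class equation; strictness comes from a non-linear character at $C=\{e\}$. Each is a legitimate one-line proof, and they are mirror images under the $\Irr(G)\leftrightarrow\Conj(G)$ symmetry visible in Definition~\ref{d:ASS}. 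The paper's choice pays off in Section~\ref{s:trivial-centre}, where the crude bound $|C|^2\geq|C|$ is sharpened to $|C|^2\geq s|C|$ for non-central classes; your choice is closer in spirit to the ``dual'' estimate of Proposition~\ref{p:dual bound for ASS}, where $\phi(e)^2\geq 1$ is sharpened to $\phi(e)^2\geq m^2$ for non-linear~$\phi$.
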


\begin{proof}
This is demonstrated in the proof of Theorem 1.8 and Corollary 1.9 in \cite{AzSaSp}, but for sake of completeness we include the argument here.
Note that $\ASS(G)$ is obtained by considering the right-hand side of \eqref{eq:define AMZL} and only summing over the terms indexed by $\{(C,C) \st C\in\Conj(G)\} \subset \Conj(G)\times\Conj(G)$. Thus $\AMZL(G)\geq\ASS(G)$.
Moreover, since $|C|^2\geq |C|$, we have
\[ \begin{aligned}
\ASS(G) & \geq
 |G|^{-2} \sum_{C\in\Conj(G)}\sum_{\phi\in\Irr(G)} |C| |\phi(C)|^2 \phi(e)^2  \\
& = \frac{1}{|G|}\sum_{\phi\in\Irr(G)}  \phi(e)^2 \sum_{C\in\Conj(G)} \frac{|C|}{|G|} |\phi(C)|^2 & = 1\,,
\end{aligned} \]
where the final equality follows from the Schur orthogonality relations (row and column).
Finally, if $G$ is non-abelian, then there is at least one $C\in\Conj(G)$ and $\phi\in\Irr(G)$ such that $|C|^2|\phi(C)|^2 > |C| |\phi(C)|^2$, so that running the argument above we see that the inequality is strict.
\end{proof}

\begin{rem}
Combining Proposition~\ref{p:ASS(non-abelian)} with Rider's hard result (Theorem~\ref{t:rider-thm}) one sees that if $G$ is non-abelian then $\AMZL(G)\geq 1+ 1/300$, i.e.~we get a proof of Theorem~\ref{t:ASSgap}.
However, it is important to note that this does not tell us anything about
\[ \inf\{ \ASS(G) \st \text{$G$ is finite and non-abelian} \}. \]
Indeed, the present paper is unable to determine if this infimum is strictly greater than~$1$.
\end{rem}

A closer inspection of the proof of Proposition~\ref{p:ASS(non-abelian)} yields the inequality
\[ \ASS(G) - 1 \geq \frac{s(G)^2-s(G)}{|G|} \]
where $s(G) = \min \{ |C| \st C\in \Conj(G), |C| > 1 \}$.
However, in cases where $Z(G)=\{e\}$, we can do much better. This will be the topic of the next section.

\end{subsection}
\end{section}

\begin{section}{Bounds on $\ASS$ for groups with trivial centre}
\label{s:trivial-centre}
In this section we obtain a new lower bound for $\ASS(G)$ when $G$ has trivial centre. This is the key idea needed for a new proof of Theorem~\ref{t:ASSgap} with a much better constant than that of~\cite{AzSaSp}.

\begin{thm}[Lower bound on $\ASS(G)$ when $Z(G)=\{e\}$]
\label{t:bound for trivial centre}
Let $G$ be a finite group with trivial centre, and let $s$ be the smallest size of its non-trivial conjugacy classes. Then
\[ \ASS(G) -1 \geq \frac{s-1}{s+1} \cdot \frac{2s}{s+1} \geq \frac{4}{9}\ .\]
\end{thm}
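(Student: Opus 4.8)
The plan is to reduce the stated inequality to a purely numerical estimate on the quantity $Q := \sum_{\phi\in\Irr(G)}\phi(e)^4$, and then to verify that estimate by bounding the largest irreducible degree. First I would rewrite $\ASS(G)$ as $|G|^{-2}\sum_{C\in\Conj(G)}|C|^2 g(C)$, where $g(C):=\sum_{\phi\in\Irr(G)}\phi(e)^2|\phi(C)|^2$. Row orthogonality gives $\sum_C |C|\,|\phi(C)|^2=|G|$ for each $\phi$, hence $\sum_C|C|g(C)=|G|\sum_\phi\phi(e)^2=|G|^2$, and therefore
\[ \ASS(G)-1=\frac{1}{|G|^2}\sum_{C\in\Conj(G)}|C|(|C|-1)\,g(C). \]
Since $Z(G)=\{e\}$, the class $\{e\}$ is the only one of size $1$ (its summand vanishes), and every other class has size $\ge s$, so $|C|-1\ge s-1$ there. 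As each $g(C)\ge 0$, this gives
\[ \ASS(G)-1 \;\ge\; \frac{s-1}{|G|^2}\sum_{C\ne\{e\}}|C|g(C) \;=\; (s-1)\Bigl(1-\frac{Q}{|G|^2}\Bigr),\qquad Q=g(\{e\})=\sum_\phi\phi(e)^4. \]
So it suffices to prove $Q\le\tfrac{s^2+1}{(s+1)^2}|G|^2$, i.e.\ $1-Q/|G|^2\ge\tfrac{2s}{(s+1)^2}$; multiplying by $s-1$ yields exactly $\tfrac{s-1}{s+1}\cdot\tfrac{2s}{s+1}$, and since this is increasing in $s$ it is $\ge 4/9$ (the value at $s=2$).

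To control $Q$, set $d=\max_\phi\phi(e)$ and $u=d^2/|G|\in(0,1]$. Two elementary estimates are available: $Q\le d^2\sum_\phi\phi(e)^2=d^2|G|$, giving $Q/|G|^2\le u$; and, isolating one character of degree $d$ and using $\sum b_i^2\le(\sum b_i)^2$ for nonnegative reals on the remaining degrees, $Q\le d^4+(|G|-d^2)^2$, giving $Q/|G|^2\le u^2+(1-u)^2$. Hence $Q/|G|^2\le\min\{u,\ u^2+(1-u)^2\}$. A short calculation shows this minimum is at most $\tfrac{s^2+1}{(s+1)^2}$ whenever $u\le\tfrac{s}{s+1}$: for $u\le\tfrac1{s+1}$ use the first bound (and $\tfrac1{s+1}\le\tfrac{s^2+1}{(s+1)^2}$); for $\tfrac1{s+1}\le u\le\tfrac{s}{s+1}$ use the second, observing that the convex function $u\mapsto u^2+(1-u)^2$ equals $\tfrac{s^2+1}{(s+1)^2}$ at both endpoints of that interval. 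This settles the theorem in the range $u\le\tfrac{s}{s+1}$.

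The remaining case $u>\tfrac{s}{s+1}$, i.e.\ $d^2>\tfrac{s}{s+1}|G|$, is the genuinely hard part, and the plan is to show it cannot occur when $Z(G)=\{e\}$ and $s\ge 2$. Fix $\psi\in\Irr(G)$ of degree $d$, choose $x$ in a class of size $s$, and let $H=C_G(x)$, so $[G:H]=s$. Then $\langle\psi|_H,\psi|_H\rangle_H\ge \psi(e)^2/|H|=sd^2/|G|>\tfrac{s^2}{s+1}>s-1$; being a nonnegative integer that is also $\le[G:H]=s$ by Frobenius reciprocity (a character of degree $sd$ contains $\psi$ at most $s$ times), it equals $s$. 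Consequently $\sum_{h\in H}|\psi(h)|^2=s|H|=|G|=\sum_{g\in G}|\psi(g)|^2$, so $\psi$ vanishes off $H$; and then $\psi$ must be faithful, since otherwise $d^2\le[G:\ker\psi]\le|G|/2<\tfrac{s}{s+1}|G|<d^2$. One then analyses the irreducible constituents of $\psi|_H$ — which all induce to multiples of $\psi$ — together with the central element $x\in Z(H)$ (via Schur's lemma and a second-moment count on $Z(H)$) to contradict $d^2>\tfrac{s}{s+1}|G|$. I expect this final step — ruling out a single dominant irreducible degree in a centreless group — to be the main obstacle, and the one place where some care with induced characters and the structure of $C_G(x)$ cannot be avoided.
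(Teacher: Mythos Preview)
Your reduction is correct and in fact coincides with the paper's: writing $a_\phi=\phi(e)^2/|G|$, your identity $\ASS(G)-1=(s-1)\bigl(1-Q/|G|^2\bigr)$ is exactly the first inequality of the paper's Proposition~\ref{p:BS-1st-step}, since $1-Q/|G|^2=\sum_\phi a_\phi(1-a_\phi)$. Your two numerical bounds on $Q/|G|^2$ likewise recover the paper's bound $\sum_\phi a_\phi(1-a_\phi)\ge(1-\mu)\max(2\mu,1)$ verbatim (with $u=\mu$). So through the case $u\le s/(s+1)$ your argument \emph{is} the paper's argument.

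The gap is the ``remaining case'' $u>s/(s+1)$, which you correctly identify as the crux but do not close. Your proposed route via $\langle\psi|_H,\psi|_H\rangle_H=s$, vanishing of $\psi$ off $H=C_G(x)$, faithfulness, and then an unspecified analysis of constituents of $\psi|_H$ is plausible but incomplete (and $H$ need not be normal, so the ``induce to multiples of $\psi$'' step is not automatic). The paper disposes of this case in three lines, without any induction/restriction machinery. It simply shows that $u>s/(s+1)$ is impossible (Lemma~\ref{l:BS-2nd-step}): pick $\psi$ with $a_\psi=\mu$ and a class $C$ of size $s$; the trivial pointwise bound $|\phi(C)|\le\phi(e)$ gives $A_{\phi,C}\le s\,a_\phi$ for every $\phi$, while the row sum for $\psi$ gives $A_{\psi,C}\le 1-\mu$; now the column sum at $C$ yields
\[
1=\sum_\phi A_{\phi,C}=A_{\psi,C}+\sum_{\phi\ne\psi}A_{\phi,C}\le(1-\mu)+s\sum_{\phi\ne\psi}a_\phi=(1+s)(1-\mu),
\]
i.e.\ $\mu\le s/(s+1)$. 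This replaces your entire final paragraph and needs only Schur orthogonality, so you can drop the induced-character apparatus altogether.
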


\begin{rem}
The author has not found any groups with trivial centre for which $\ASS(G)=13/9$. Probably one can improve on this lower bound by investigating the examples with $s=2$ more closely.
\end{rem}

The proof of Theorem~\ref{t:bound for trivial centre} will occupy the rest of this section.
To streamline some of the formulas which follow, we
introduce the matrix $A:\Irr(G)\times \Conj(G) \to [0,1]$ defined by
\begin{equation}\label{eq:ignore phase}
A_{\phi,C} = |G|^{-1} |C| \ |\phi(C)|^2 \qquad(\phi\in\Irr(G),C\in\Conj(G)).
\end{equation}
Note that rows and columns of $A$ add up to~$1$.
We define
\[ \mu\defeq \max_{\phi\in\Irr(G)} A_{\phi,e} = \max_{\phi\in\Irr(G)} |G|^{-1} \phi(e)^2 \in (0,1).\]

\begin{prop}\label{p:BS-1st-step}
Let $G$ be a finite group with trivial centre, and let $s=s(G)$ be as defined in the statement of Theorem~\ref{t:bound for trivial centre}.
Then
\begin{equation}\label{eq:BS-1st-step}
\frac{\ASS(G)-1}{s-1} \geq \sum_{\phi\in\Irr(G)} A_{\phi,e}( 1- A_{\phi,e}) 
\geq \max(2\mu,1)(1-\mu).
\end{equation}
\end{prop}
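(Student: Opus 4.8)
The plan is to massage the definition of $\ASS(G)$ into a sum over $\Irr(G)$ of the quantities $A_{\phi,e}(1-A_{\phi,e})$, using the trivial-centre hypothesis to control how the conjugacy-class sizes enter. Recall that $\ASS(G) = |G|^{-2}\sum_{C,\phi} |C|^2|\phi(C)|^2\phi(e)^2$, which in terms of the matrix $A$ reads $\ASS(G) = \sum_{\phi\in\Irr(G)} \phi(e)^2 \sum_{C\in\Conj(G)} |C|\, A_{\phi,C}$. Split off the identity class: since $Z(G)=\{e\}$, every conjugacy class $C\neq\{e\}$ has $|C|\geq s$, so $\sum_{C\neq\{e\}} |C| A_{\phi,C} \geq s\sum_{C\neq\{e\}} A_{\phi,C} = s(1-A_{\phi,e})$, using that the $\phi$-row of $A$ sums to $1$. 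Hence $\sum_C |C| A_{\phi,C} \geq 1\cdot A_{\phi,e} + s(1-A_{\phi,e})$, and multiplying by $\phi(e)^2 = |G| A_{\phi,e}$ and summing over $\phi$ gives $\ASS(G) \geq |G|\sum_\phi A_{\phi,e}\bigl(A_{\phi,e} + s(1-A_{\phi,e})\bigr)$. That still carries a stray factor $|G|$, so the subtraction of $1$ must be reincorporated carefully: write $1 = \sum_\phi \phi(e)^2/|G|$? No — rather, I would use the identity $\sum_\phi A_{\phi,e} = \sum_\phi |G|^{-1}\phi(e)^2 = |\Conj(G)|/|G|\cdot(\text{something})$... the cleaner route is to recall from the proof of Proposition~\ref{p:ASS(non-abelian)} that $1 = |G|^{-1}\sum_\phi \phi(e)^2 \sum_C (|C|/|G|)|\phi(C)|^2 = \sum_\phi \phi(e)^2 \sum_C A_{\phi,C}\cdot|G|^{-1}$; equivalently $1 = \sum_\phi A_{\phi,e}\cdot\phi(e)^2/(|G|A_{\phi,e})\cdot(\ldots)$. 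The correct bookkeeping: $1 = \sum_\phi \frac{\phi(e)^2}{|G|}\cdot\frac{1}{\phi(e)^2}\sum_C |C||\phi(C)|^2 \cdot\frac{\phi(e)^2}{|G|}$ is getting tangled, so in the writeup I would instead just subtract the two displayed expressions term by term.

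Concretely, the key manipulation is: $\ASS(G) - 1 = \sum_\phi \phi(e)^2\sum_C A_{\phi,C}(|C| - 1)$ — this follows because $1 = \sum_\phi\phi(e)^2\sum_C A_{\phi,C}$ is exactly the content of the Schur-orthogonality computation in Proposition~\ref{p:ASS(non-abelian)} (both row and column relations). Now for $C\neq\{e\}$ we have $|C|-1\geq s-1$, while the $C=\{e\}$ term contributes $0$; therefore $\ASS(G)-1 \geq (s-1)\sum_\phi \phi(e)^2\sum_{C\neq\{e\}} A_{\phi,C} = (s-1)\sum_\phi \phi(e)^2(1-A_{\phi,e}) = (s-1)\sum_\phi |G| A_{\phi,e}(1-A_{\phi,e})$. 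Hmm, the factor $|G|$ appears again — so either the intended reading of the proposition has an implicit normalization, or one should bound $\phi(e)^2(1-A_{\phi,e}) \geq A_{\phi,e}(1-A_{\phi,e})$ directly. Indeed $\phi(e)^2 \geq 1 \geq A_{\phi,e}$ is false in the wrong direction; but $\phi(e)^2 = |G|A_{\phi,e} \geq A_{\phi,e}$ since $|G|\geq 1$. That gives $\ASS(G)-1\geq (s-1)\sum_\phi A_{\phi,e}(1-A_{\phi,e})$, which is the first inequality of \eqref{eq:BS-1st-step} after dividing by $s-1$. I expect this step — getting the normalization exactly right so that $A_{\phi,e}(1-A_{\phi,e})$, not $\phi(e)^2(1-A_{\phi,e})$, appears — to be the main subtlety, and it hinges on the crude but valid bound $\phi(e)^2\geq A_{\phi,e}$.

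For the second inequality, $\sum_\phi A_{\phi,e}(1-A_{\phi,e}) \geq \max(2\mu,1)(1-\mu)$, I would argue as follows. The function $t\mapsto t(1-t)$ is concave on $[0,1]$, the values $A_{\phi,e}$ lie in $[0,\mu]$, and crucially $\sum_\phi A_{\phi,e} = 1$: this is a column sum of $A$ (the $e$-column), which equals $1$ by the note after \eqref{eq:ignore phase}. So I need a lower bound for $\sum_i f(a_i)$ where $f(t)=t(1-t)$, $a_i\in[0,\mu]$, $\sum a_i = 1$. Since $f(0)=0$, discarding zero terms, and since $f$ is concave with $f$ increasing on $[0,1/2]$: if $\mu\geq 1/2$ the single term with $a_i=\mu$ already gives $\sum f(a_i)\geq f(\mu) = \mu(1-\mu)$, but we want $2\mu(1-\mu)$, so we need at least the two largest terms; because $\sum a_i = 1 > \mu$ there must be at least two nonzero terms, and by concavity $f(a)+f(b)\geq f(a+b)$-type reasoning won't directly help — instead use that $f(a_i)\geq \frac{a_i}{\mu}f(\mu) = a_i(1-\mu)$ for each $a_i\in[0,\mu]$ (this is the chord/concavity bound, valid since $f(0)=0$ and $f$ concave), whence $\sum_\phi f(A_{\phi,e}) \geq (1-\mu)\sum_\phi A_{\phi,e} = (1-\mu)$. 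That yields the $1\cdot(1-\mu)$ branch. For the $2\mu(1-\mu)$ branch when $\mu\leq 1/2$: apply the chord bound to all but the maximizing term, $\sum_{\phi\neq\phi_0} f(A_{\phi,e}) \geq (1-\mu)\sum_{\phi\neq\phi_0}A_{\phi,e} = (1-\mu)(1-\mu)$, and add $f(\mu)=\mu(1-\mu)$ from $\phi_0$, giving $(1-\mu)(1-\mu+\mu) = (1-\mu)$ again — not enough. Better: when $\mu\leq 1/2$, note $1 = \sum A_{\phi,e} \leq$ (number of nonzero terms)$\cdot\mu$, so there are at least $\lceil 1/\mu\rceil \geq 2$ nonzero terms; taking the two largest, call them $\mu\geq a_1\geq a_2>0$ with $a_1 = \mu$ (WLOG the max is attained) and $a_1+a_2\leq 1$, and using $f$ increasing on $[0,1/2]$ plus concavity, one shows $f(a_1)+f(a_2)+\sum_{\text{rest}}f \geq 2\mu(1-\mu)$; the cleanest version is again the chord bound $f(a_i)\geq a_i(1-\mu)$ applied to get $\sum f(A_{\phi,e})\geq (1-\mu)$, combined with the observation that when $\mu\le 1/2$ we may instead use the sharper chord through $(\mu, f(\mu))$ with a larger slope. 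I would present this as: by concavity of $f$ and $f(0)=0$, for $t\in[0,\mu]$ we have $f(t)\geq t\cdot\frac{f(\mu)}{\mu} = t(1-\mu)$; summing gives $\geq 1-\mu$; and separately, keeping the maximal term exactly and chord-bounding the rest from $0$ up to their sum $1-\mu\le 1$ — since $f$ is concave, $\sum_{\phi\ne\phi_0} f(A_{\phi,e}) \ge f(1-\mu)$ is false, but $\sum_{\phi\ne\phi_0}f(A_{\phi,e})\ge$ ... I will work out the exact elementary optimization in the final draft; I expect this small concave-optimization lemma to be the one genuinely fiddly point, and I would isolate it as a self-contained sublemma: \emph{if $a_1,\dots,a_n\in[0,\mu]$ sum to $1$, then $\sum_i a_i(1-a_i)\geq\max(2\mu,1)(1-\mu)$}, proved by noting the extremum is attained at an extreme point of the polytope, i.e.\ when as many $a_i$ as possible equal $\mu$.
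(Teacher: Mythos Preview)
Your plan for the first inequality has the right shape but the bookkeeping is off by exactly the factor of $|G|$ you keep running into. The source is the very first translation into the $A$-matrix: the correct identity is
\[
\ASS(G) = \sum_{\phi\in\Irr(G)} A_{\phi,e} \sum_{C\in\Conj(G)} |C|\, A_{\phi,C},
\]
not $\sum_\phi \phi(e)^2 \sum_C |C| A_{\phi,C}$ (recall $A_{\phi,e} = |G|^{-1}\phi(e)^2$). Likewise, your claimed identity ``$1 = \sum_\phi \phi(e)^2 \sum_C A_{\phi,C}$'' actually evaluates to~$|G|$, so the displayed equation $\ASS(G)-1=\sum_\phi\phi(e)^2\sum_C A_{\phi,C}(|C|-1)$ is false as stated. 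Once you replace $\phi(e)^2$ by $A_{\phi,e}$ throughout, everything works cleanly: $\ASS(G) - 1 = \sum_\phi A_{\phi,e} \sum_C (|C|-1) A_{\phi,C}$, and bounding $|C|-1 \geq s-1$ for $C \neq \{e\}$ gives the first inequality directly. No weakening step ``$\phi(e)^2 \geq A_{\phi,e}$'' is needed; that proposed fix is a symptom of the wrong normalization, not a cure, and the intermediate inequality you would be weakening from is itself false. With the normalization corrected, this is exactly the paper's argument.

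For the second inequality, your chord bound $t(1-t) \geq t(1-\mu)$ on $[0,\mu]$ does give $\sum_\phi A_{\phi,e}(1-A_{\phi,e}) \geq 1-\mu$, which is what is needed when $\mu \leq 1/2$ (you have the branches reversed: $\max(2\mu,1) = 1$ precisely when $\mu \leq 1/2$). What is missing is the case $\mu > 1/2$, where the target is $2\mu(1-\mu)$, and here your argument stalls. The paper's observation is short: fix $\psi$ with $A_{\psi,e} = \mu$; then for every $\phi \neq \psi$,
\[
1 - A_{\phi,e} \;=\; \sum_{\gamma \neq \phi} A_{\gamma,e} \;\geq\; A_{\psi,e} \;=\; \mu,
\]
so $1 - A_{\phi,e} \geq \max(\mu, 1-\mu)$ for all such $\phi$. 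Splitting off the $\psi$-term and summing gives $\mu(1-\mu) + (1-\mu)\max(\mu,1-\mu) = (1-\mu)\max(2\mu,1)$ in one line. Your extreme-point approach to the sublemma would also succeed --- for $\mu > 1/2$ the unique extreme point of $\{a \in [0,\mu]^n : \sum a_i = 1\}$, up to permutation, is $(\mu, 1-\mu, 0, \dots, 0)$, giving the value $2\mu(1-\mu)$ --- but it is more work than the one-line estimate above.
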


\begin{proof}
We have
\[ \begin{aligned}
\ASS(G)
 &  =  \sum_{\phi\in\Irr(G)} \sum_{C\in \Conj(G)} |C| A_{\phi,C} A_{\phi,e} \\
 & \geq \sum_{\phi\in\Irr(G)} \left(A_{\phi,e}^2+ \sum_{C\in \Conj(G)\setminus\{e\}} sA_{\phi,C} A_{\phi,e} \right)
    & \text{(definition of $s$)} \\
 & = (1-s)\sum_{\phi\in\Irr(G)} A_{\phi,e}^2 + s\sum_{\phi\in\Irr(G)}\sum_{C\in \Conj(G)} A_{\phi,C} A_{\phi,e}  \\
 & = (1-s)\sum_{\phi\in\Irr(G)} A_{\phi,e}^2 + s\sum_{\phi\in\Irr(G)}  A_{\phi,e}     & \text{(rows of $A$ sum to $1$)} \\
 & = (s-1)\sum_{\phi\in\Irr(G)} A_{\phi,e}\left( 1- A_{\phi,e}\right) + \sum_{\phi\in\Irr(G)} A_{\phi,e} \,.
\end{aligned} \]
Since columns of $A$ sum to $1$, we obtain
the first inequality in \eqref{eq:BS-1st-step}.

To get the second inequality: fix $\psi\in\Irr(G)$ with $A_{\psi,e}=\mu = \max_{\phi\in\Irr(G)} A_{\phi,e}$. For each $\phi\in\Irr(G)\setminus\{\psi\}$ we have $1-A_{\phi,e}\geq 1-\mu$; also, since columns of $A$ sum to $1$,
\[   1-A_{\phi,e} = \sum_{\gamma\in\Irr(G)\setminus\{\phi\}} A_{\gamma,e} \geq A_{\psi,e}=\mu\,. \]
Hence
\[ \begin{aligned}
\sum_{\phi\in\Irr(G)} A_{\phi,e}\left( 1- A_{\phi,e}\right) 
& = \mu(1-\mu) + \sum_{\phi\in\Irr(G)\setminus\{\psi\}} A_{\phi,e}\left(1-A_{\phi,e} \right) \\
& \geq \mu(1-\mu) + \sum_{\phi\in\Irr(G)\setminus\{\psi\}} A_{\phi,e}\max(\mu,1-\mu) \\
& = \mu(1-\mu)+\max(\mu,1-\mu)(1-\mu) 
    & \text{(rows of $A$ sum to $1$)} \\ 
& =(1-\mu)\max(2\mu,1),
\end{aligned} \]
as required.
\end{proof}

\begin{eg}\label{eg:affine}
Let $q$ be a prime power $\geq 3$ and consider $G=\Aff(\bF_q)$, the affine group of the finite field with $q$ elements. The character theory of this group is well known, and gives
$\mu = |G|^{-1}(q-1)^2 = 1- q^{-1}$.
For this group $s=q-1$, so that
\begin{equation}\label{eq:crude bound for Aff}
\ASS(\Aff(\bF_q)) -1
  \geq (s-1)(1-\mu)\max(2\mu,1) 
  = (q-2) \cdot \frac{1}{q}\cdot 2\left(1-\frac{1}{q}\right);
\end{equation}
and the right-hand side of \eqref{eq:crude bound for Aff}, being a monotone increasing function of $q$, is minimized when $q=3$ (with minimum value $4/3$). In fact an explicit but tedious calculation shows that $\ASS(\Aff(\bF_q))=3-4q^{-1}$, so that our lower bound is a slight underestimate.
\end{eg}

\medskip
Example~\ref{eg:affine} shows that even within the class of groups with trivial centre, $1-\mu$ can be arbitrarily small. However, note that in this example, when $1-\mu$ is small $s$ is large. The following result shows that this happens more generally.

\begin{lem}\label{l:BS-2nd-step}
Let $G$ be a finite group. Then
$1\leq (1+s)(1-\mu)$.
\end{lem}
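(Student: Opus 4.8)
The plan is to rewrite the claimed inequality in the equivalent form $\mu\le s/(s+1)$ and to prove that. (If $G$ is abelian it has no non-trivial conjugacy class, so $s$ is not defined and the statement is vacuous; thus I may assume $G$ is non-abelian.) Write $d$ for the largest degree occurring in $\Irr(G)$, so that $\mu=|G|^{-1}d^2$; the target inequality $\mu\le s/(s+1)$ is then the same as $(s+1)d^2\le s|G|$. Fix $\phi\in\Irr(G)$ with $\phi(e)=d$, and fix an element $x$ lying in a conjugacy class of smallest non-trivial size, so that $|C_G(x)|=|G|/s$. Everything will come from two cheap consequences of the Schur column-orthogonality relations, combined so as to eliminate the a priori unknown quantity $|\phi(x)|^2$.

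For the first, orthogonality of the $e$-column and the $x$-column (legitimate since $x\ne e$) gives $\sum_{\psi\in\Irr(G)}\psi(e)\overline{\psi(x)}=0$, whence
\[ d\,|\phi(x)| \;=\; \Bigl\lvert\sum_{\psi\in\Irr(G)\setminus\{\phi\}}\psi(e)\,\overline{\psi(x)}\Bigr\rvert \;\le\; \sum_{\psi\ne\phi}\psi(e)\,|\psi(x)| \;\le\; \sum_{\psi\ne\phi}\psi(e)^2 \;=\; |G|-d^2, \]
using the elementary bound $|\psi(x)|\le\psi(e)$ and $\sum_{\psi}\psi(e)^2=|G|$; hence $|\phi(x)|^2\le (|G|-d^2)^2/d^2$. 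For the second, the $x$-column orthogonality relation $\sum_{\psi\in\Irr(G)}|\psi(x)|^2=|C_G(x)|=|G|/s$, after splitting off the $\phi$-term and again using $|\psi(x)|\le\psi(e)$ for $\psi\ne\phi$, yields
\[ \frac{|G|}{s}-|\phi(x)|^2 \;=\; \sum_{\psi\ne\phi}|\psi(x)|^2 \;\le\; \sum_{\psi\ne\phi}\psi(e)^2 \;=\; |G|-d^2, \]
so that $d^2\le |G|(1-1/s)+|\phi(x)|^2$.

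Feeding the first inequality into the second and writing $t=\mu=d^2/|G|\in(0,1]$ turns the result into $t-1+\tfrac1s\le (1-t)^2/t$; multiplying through by $t>0$, the quadratic terms cancel and this collapses to $t\,(1+\tfrac1s)\le 1$, i.e.\ $\mu\le s/(s+1)$, which is exactly $1\le(1+s)(1-\mu)$. I do not expect a real technical obstacle here: each ingredient is just Schur orthogonality together with $|\psi(x)|\le\psi(e)$, and the only idea needed is to play these two individually-lossy estimates off against one another. As a sanity check, both estimates --- and hence the final bound --- are simultaneously sharp for the groups $\Aff(\bF_q)$ of Example~\ref{eg:affine} (taking $\phi$ the character of degree $q-1$ and $x$ a non-trivial translation), which makes me confident that no sharper input is required.
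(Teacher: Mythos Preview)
Your proof is correct, but it takes a different route from the paper's. The paper works entirely in terms of the doubly stochastic matrix $A_{\phi,C}=|G|^{-1}|C|\,|\phi(C)|^2$: choosing $\psi$ with $A_{\psi,e}=\mu$ and $C$ with $|C|=s$, the \emph{row}-sum property immediately gives $A_{\psi,C}\le 1-\mu$, while the trivial bound $|\phi(C)|\le\phi(e)$ gives $A_{\phi,C}\le sA_{\phi,e}$ for every $\phi$; summing the $C$-column then yields
\[
1=A_{\psi,C}+\sum_{\phi\ne\psi}A_{\phi,C}\le(1-\mu)+s\sum_{\phi\ne\psi}A_{\phi,e}=(1+s)(1-\mu)
\]
in one line, with no algebraic manipulation needed. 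You instead combine two \emph{column} relations --- the off-diagonal one $\sum_\psi\psi(e)\overline{\psi(x)}=0$ and the diagonal one $\sum_\psi|\psi(x)|^2=|G|/s$ --- to sandwich $|\phi(x)|^2$ from both sides, and then eliminate it; the apparent quadratic in $t=\mu$ collapses linearly, as you observed. This is perfectly valid and, as you note, sharp on the same extremal examples; but it is a slightly longer detour, because your first inequality re-derives via the triangle inequality a bound that the paper gets for free from the row sum. The paper's pairing of one row relation with one column relation is what makes its argument a one-liner.
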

\begin{proof}
Choose $\psi$ with $A_{\psi,e}=\mu$ and choose $C$ with $|C|=s$.
Then $A_{\psi,C} \leq 1-\mu$ (since rows of $A$ sum to $1$), while
\[ A_{\phi,C} = \frac{s}{|G|} |\phi(C)|^2 \leq  \frac{s}{|G|} \phi(e)^2 = s A_{\phi,e} \quad\text{for all $\phi\in\Irr(G)$.} \] 
Since the columns of $A$ sum to $1$, this gives
\[ 1 = A_{\psi,C} + \sum_{\phi\neq\psi}A_{\phi,C} \leq (1-\mu) + s\sum_{\phi\neq \psi} A_{\phi,e} = (1-\mu)+s(1-\mu) \]
as required.
\end{proof}

\begin{rem}\

\begin{YCnum}
\item
The idea behind the proof of Lemma~\ref{l:BS-2nd-step} comes from considering cases such as Example~\ref{eg:affine}, where the ratio $\mu$ is large in the sense of being close to~$1$. Picking $\phi\in\Irr(G)$ with $A_{\phi,e}=\mu$ and $C\in\Conj(G)$ with $|C|=s$, one reasons as follows:
\begin{itemize}
\item since the column labelled by $e$ adds up to $1$,
 $A_{\phi,e}$ must be small for every $\phi\in \Irr(G)\setminus\{\psi\}$;
\item since the row labelled by $\psi$ adds up to~$1$, $A_{\psi,C}$ has to be small;
\item
in turn, this forces $A_{\phi,C}$ to be large for every $\phi\in\Irr(G)\setminus\{\psi\}$.
\end{itemize}

\item
There are groups for which equality is achieved in Lemma~\ref{l:BS-2nd-step}. For instance, if $G$ is the dihedral group of order~$6$, we have $s=2$, $\mu= 2^2/6 = 2/3$, $(s+1)(1-\mu) = 1$.

\item
Lemma~\ref{l:BS-2nd-step} implies that if a finite group has an irreducible character with ``relatively high degree'', not only must the group have trivial centre, but all conjugacy classes other than the identity have to be ``quite large''.
Such observations are probably not new, but we are unaware of any references in the literature which provide such inequalities.
\end{YCnum}
\end{rem}

\begin{proof}[Proof of Theorem~\ref{t:bound for trivial centre}]
Let $h:[0,1] \to [0,\infty)$ be the function $h(t) = (1-t)\max(2t,1)$. This is a continuous, decreasing function. By Proposition~\ref{p:BS-1st-step} and Lemma~\ref{l:BS-2nd-step},
\[ \ASS(A)-1 \geq (s-1)h(\mu) \geq (s-1)h\left(\frac{s}{s+1}\right) \ . \]
Now since $s/(s+1) \geq 2/3 > 1/2$\/,
\[ 
(s-1)h\left(\frac{s}{s+1}\right) = \frac{s-1}{s+1} \cdot \frac{2s}{s+1} \, ;
\]
and the right hand side is minimized when we take $s-2$, with value $4/9$.
\end{proof}

\begin{rem}
We could have tried the same approach for a non-abelian finite group $G$ with non-trivial centre.  The proof of Proposition~\ref{p:BS-1st-step} goes through much as before\footnotemark, but this time we merely obtain
\footnotetext{We need to know that $|\phi(x)|=\phi(e)$ for all $x\in Z(G)$ and all $\phi\in\Irr(G)$. This is an easy consequence of Schur's lemma, see e.g.~\cite[Lemma 2.25]{Isaacs_CTbook}.}
\[ \ASS(G)-1 \geq (s-1) \sum_{\phi\in\Irr(G)} \frac{\phi(e)^2}{|G|} \left( 1- \frac{|Z(G)| \phi(e)^2}{|G|}\right). \]
Now if we define $\mu=\max_{\phi\in\Irr(G)}|G|^{-1}\phi(e)^2$ as before, then $|Z(G)| \mu\leq 1$, but this time we can have equality, which means that some of the terms in the sum above will be zero. It is not clear how to obtain good lower bounds on the remaining contributions to the sum.
\end{rem}

\end{section}

\begin{section}{Bounds on $\AMZL$ for just non-abelian groups}
\label{s:JNA cases}

\begin{subsection}{JNA groups: definitions and two results}\label{ss:JNA background}

Throughout, we denote the \dt{derived subgroup} (or \dt{commutator subgroup}) of $G$ by $[G,G]$ rather than $G'$.

\begin{dfn}
We say that a quotient of a group $G$ is \dt{proper} if it is not equal to $G$ itself. A group $G$ is \dt{just non-abelian} (JNA for short) if it is non-abelian yet all its proper quotients are abelian.
\end{dfn}

\begin{rem}
Since every homomorphism from $G$ to an abelian group factors through the quotient homomorphism $G\to G/[G,G]$, a group $G$ is JNA if and only if all its non-trivial normal subgroups contain $[G,G]$. In alternative terminology: if $G$ is JNA then $[G,G]$ is the minimal non-trivial normal subgroup of~$G$.
\end{rem}

Recall that by Corollary~\ref{c:AMZL-of-prod-and-quot}\ref{li:quot}, if $H$ is a quotient of $G$ then $\AMZL(G)\geq \AMZL(H)$. Now if $G$ is a finite, non-abelian group, there exists a quotient of $G$ which is JNA. (The proof is a routine induction which we omit.) Hence
\[ \inf\{\AMZL(G)\st \text{$G$ is finite and non-abelian}\} =
  \inf\{\AMZL(G)\st \text{$G$ is finite and JNA}\}. \]
Therefore Theorem~\ref{t:mainthm} will follow from the following special case.

\begin{thm}\label{t:bound for JNA cases}
Let $G$ be a finite JNA group. Then $\AMZL(G)\geq 7/4$.
\end{thm}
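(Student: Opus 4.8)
The plan is to reduce the classification of finite JNA groups to a small number of structural cases and handle each one using either the trivial-centre bound from Theorem~\ref{t:bound for trivial centre} or direct calculations from \cite{ACS_AMZL-2cd}. Recall that a finite group $G$ is JNA precisely when $[G,G]$ is its unique minimal normal subgroup; in particular $[G,G]$ is characteristically simple, hence either an elementary abelian $p$-group or a direct power of a non-abelian simple group. The first coarse split is therefore according to whether $[G,G]$ is abelian or not.

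If $[G,G]$ is \emph{non-abelian}, then since $[G,G]$ is the minimal normal subgroup it must in fact equal the full socle, and one can show $Z(G)=\{e\}$ (any non-trivial central subgroup would be a normal subgroup missing $[G,G]$, contradicting the JNA property unless $[G,G]\leq Z(G)$, forcing $[G,G]$ abelian). So in this case $G$ has trivial centre and Theorem~\ref{t:bound for trivial centre} gives $\AMZL(G)\geq\ASS(G)\geq 13/9 > 7/4$... wait, $13/9 \approx 1.44 < 1.75$, so this is \emph{not} yet enough. Here is where the two technical inputs flagged in the introduction enter: Lemma~\ref{l:baby JN2} and Theorem~\ref{t:is-dihedral} must be invoked to pin down the structure further. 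The real work is to show that in the non-abelian-derived-subgroup case the relevant parameter $s$ (smallest non-trivial conjugacy class size) is large enough, or that the group is of a very restricted shape, so that the bound $(s-1)h(s/(s+1))$ exceeds $7/4$; this needs $s\geq 3$, i.e.\ one must rule out $s=2$ when $[G,G]$ is non-abelian. I expect this is where a genuine structural argument about JNA groups with non-abelian socle is required, presumably showing such groups have all non-identity classes of size at least $3$ (indeed much larger), so $\ASS(G)\geq 2 > 7/4$.

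If $[G,G]$ is \emph{abelian}, say elementary abelian of order $p^k$, then $G$ is metabelian and $[G,G]$ is an irreducible $\mathbf F_p[G/[G,G]]$-module. The subcase analysis should proceed by whether $Z(G)$ is trivial or not. When $Z(G)=\{e\}$ we again apply Theorem~\ref{t:bound for trivial centre}, but once more the bound $13/9$ falls short of $7/4$, so we must combine it with the explicit character-degree information available for these groups: a JNA group with abelian derived subgroup has all non-linear irreducible characters of a single degree $d$ with $d^2 = [G:Z(G)]/|[G,G]|$ or similar, and plugging such rigid data into the definition of $\AMZL(G)$ via \eqref{eq:define AMZL}, using results of \cite{ACS_AMZL-2cd} on groups with at most two character degrees, yields $\geq 7/4$. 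When $Z(G)\neq\{e\}$, one uses that $Z(G)$ is cyclic (being contained in every non-trivial normal subgroup's... no — $Z(G)$ must contain $[G,G]$ or be trivial, so $[G,G]\leq Z(G)$, meaning $G$ is nilpotent of class $2$); then Theorem~\ref{t:is-dihedral} presumably identifies that the extremal cases are exactly (quotients related to) the dihedral group $D_8$ and the quaternion group $Q_8$, for which the constant is exactly $7/4$, and all other class-$2$ JNA groups give strictly more.

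The main obstacle, I expect, is \textbf{not} the trivial-centre estimate (which is clean) but rather bridging the gap between $13/9$ and $7/4$ in every case where the centre is trivial: Theorem~\ref{t:bound for trivial centre} alone is insufficient, so one genuinely needs the extra leverage of knowing the group is JNA to control the \emph{full} sum \eqref{eq:define AMZL} rather than just its diagonal minorant $\ASS$, or to force $s$ to be large. Organizationally, the proof will be a case division: (i) $[G,G]$ non-abelian — show $s$ large, conclude via $\ASS$; (ii) $[G,G]$ abelian and $Z(G)=\{e\}$ — combine the two-character-degree calculations of \cite{ACS_AMZL-2cd} with the structure of irreducible modules; (iii) $[G,G]$ abelian and $Z(G)\neq\{e\}$, so $G$ is nilpotent of class $2$ — use Theorem~\ref{t:is-dihedral} to reduce to $D_8$ and $Q_8$ and check $\AMZL(D_8)=\AMZL(Q_8)=7/4$ directly. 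The sharpness (Example~\ref{eg:when AMZL=7/4}) confirms case (iii) is where the infimum is attained.
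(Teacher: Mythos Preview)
Your proposal has the right overall shape (a case split followed by a mixture of the $\ASS$-bound and explicit calculations from \cite{ACS_AMZL-2cd}), but you have swapped the roles of the two technical inputs and, as a result, your case (ii) has no workable plan.

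Concretely: Lemma~\ref{l:baby JN2} is precisely the structure result for JNA groups with \emph{non-trivial} centre (it says such a $G$ is a $p$-group with $|[G,G]|=p$ and two character degrees $1$ and $|G:Z(G)|^{1/2}$), and it is what drives the explicit calculation in your case~(iii). Theorem~\ref{t:is-dihedral}, by contrast, concerns JNA groups with \emph{trivial} centre and a conjugacy class of size~$2$, and shows that any such group is $C_p\rtimes C_2$ for an odd prime~$p$; it has nothing to do with $D_8$ or $Q_8$. So your invocation of Theorem~\ref{t:is-dihedral} in case~(iii) is misplaced, and your case~(i) does not need Lemma~\ref{l:baby JN2} at all.

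The paper's decomposition is simpler than yours and avoids the difficulty you flag in case~(ii). It splits purely on $Z(G)$ and then on the parameter~$s$: if $Z(G)\neq\{e\}$, Lemma~\ref{l:baby JN2} feeds into Theorem~\ref{t:ACSformula} to give $\AMZL(G)\geq 7/4$; if $Z(G)=\{e\}$ and $s=2$, Theorem~\ref{t:is-dihedral} forces $G\cong C_p\rtimes C_2$ and the explicit formula gives $\AMZL(G)\geq 7/3$; if $Z(G)=\{e\}$ and $s\geq 3$, Theorem~\ref{t:bound for trivial centre} already yields $\ASS(G)-1\geq \tfrac{2}{4}\cdot\tfrac{6}{4}=3/4$. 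In particular, the gap between $13/9$ and $7/4$ that worried you is closed not by sharpening the $\ASS$ estimate or by appealing to two-character-degree formulas in general, but simply by peeling off the single family with $s=2$ and handling it directly. Your split on whether $[G,G]$ is abelian is an unnecessary detour: note that once Theorem~\ref{t:is-dihedral} is available, a JNA group with trivial centre and $s=2$ automatically has cyclic $[G,G]$, so your case~(i) is subsumed in the $s\geq 3$ branch anyway.
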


Our proof of Theorem~\ref{t:bound for JNA cases} works by looking at various cases. 
The main point is that if $G$ is finite JNA and has non-trivial centre, then it can be handled by the main result of \cite{ACS_AMZL-2cd}. For this we use the following lemma, which is a simpler (but weaker) version of \cite[Lemma 12.3]{Isaacs_CTbook}.

\begin{lem}\label{l:baby JN2}
Let $G$ be a finite JNA group with non-trivial centre.
Then every non-linear $\phi\in\Irr(G)$ either has degree $1$ (i.e.~is linear) or has degree $|G:Z(G)|^{1/2}$.
Moreover, there is a prime $p$ such that $|G|$ is a power of $p$; $|[G,G]|=p$; and every conjugacy class has size $1$ or size~$p$.
\end{lem}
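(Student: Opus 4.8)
The plan is to reduce, in two short steps, to the situation where $G$ is a finite $p$-group with $|[G,G]|=p$ and nilpotency class $2$, and then to extract both conclusions by elementary counting. First I would use the hypothesis on the centre: since $Z(G)$ is a non-trivial normal subgroup, the defining property of a JNA group forces $[G,G]\subseteq Z(G)$, so $G$ is nilpotent and hence isomorphic to the direct product of its Sylow subgroups. If $G$ had two or more non-trivial Sylow subgroups, then writing $G\iso P\times A$ for any one Sylow subgroup $P$ and the product $A$ of the remaining ones (so $P,A\neq\{e\}$), both $G/P\iso A$ and $G/A\iso P$ would be proper quotients of $G$, hence abelian; as this applies to every Sylow subgroup, $G$ would be abelian, a contradiction. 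So $G$ is a $p$-group for a single prime $p$.

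Next I would pin down $[G,G]$. It is non-trivial because $G$ is non-abelian, and by JNA it lies inside every non-trivial normal subgroup, so it is the unique minimal normal subgroup. A standard orbit-counting argument shows that any minimal normal subgroup $N$ of a finite $p$-group is central of order $p$: the conjugation action of $G$ on $N$ has orbits of $p$-power size and fixes $e$, so $|N\cap Z(G)|$ is a positive multiple of $p$, forcing $N\subseteq Z(G)$ by minimality and then $|N|=p$ since $N$ is then a minimal subgroup of an abelian group. Hence $|[G,G]|=p$, and (as already noted) $[G,G]\subseteq Z(G)$, so $G$ has class $2$. The conjugacy class sizes now follow at once: for fixed $x\in G$ the class-$2$ identity $[x,y_1y_2]=[x,y_1][x,y_2]$ makes $y\mapsto[x,y]$ a homomorphism $G\to[G,G]$ with kernel $C_G(x)$, so $|x^G|=|G:C_G(x)|$ divides $|[G,G]|=p$; thus every conjugacy class has size $1$ (exactly when $x\in Z(G)$) or size $p$.

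For the character degrees I would first observe that any non-linear $\phi\in\Irr(G)$ is faithful: a non-trivial $\ker\phi$ would contain $[G,G]$ by JNA, so $\phi$ would be inflated from the abelian group $G/[G,G]$ and hence be linear. For such a faithful $\phi$, afforded by $\rho$, Schur's lemma gives that $\rho(g)$ is a scalar exactly when $g\in Z(G)$, so $|\phi(g)|=\phi(e)$ for $g\in Z(G)$, and therefore $|G|=\sum_{g\in G}|\phi(g)|^2\ge|Z(G)|\,\phi(e)^2$, i.e.\ $\phi(e)^2\le|G:Z(G)|$. The proof then closes by a global count. The group $G$ has $|G/[G,G]|=|G|/p$ linear characters and, by the conjugacy-class computation, $|Z(G)|+(|G|-|Z(G)|)/p$ conjugacy classes, so exactly $|Z(G)|(p-1)/p$ non-linear (equivalently, faithful) irreducible characters. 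Their squared degrees sum to $|G|-|G|/p=|G|(p-1)/p$, while each is at most $|G|/|Z(G)|$; since the number of such characters times $|G|/|Z(G)|$ equals precisely $|G|(p-1)/p$, every one of these bounds must be attained. Hence $\phi(e)^2=|G:Z(G)|$ for every non-linear $\phi\in\Irr(G)$, which together with the facts on $|G|$, $|[G,G]|$ and the class sizes is exactly the assertion of the lemma.

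The only genuinely structural input is the reduction in the first two steps — that a JNA group with non-trivial centre must be a class-$2$ $p$-group with $|[G,G]|=p$ — and the two standard sub-facts it relies on (a finite nilpotent group is the direct product of its Sylow subgroups; a minimal normal subgroup of a finite $p$-group is central of order $p$). I do not expect any real obstacle beyond assembling these pieces carefully; the closing degree count is short and self-contained, and in particular avoids Clifford theory and never uses the fact that character degrees of $p$-groups are $p$-powers — which here just falls out as a consequence of $\phi(e)^2=|G:Z(G)|$.
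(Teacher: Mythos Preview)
Your proof is correct and follows the same overall architecture as the paper's: first reduce to a $p$-group via nilpotency and the Sylow decomposition, then show $|[G,G]|=p$, then deduce the conjugacy class sizes, and finally obtain the character degrees. The arguments for the first three steps are minor variants of those in the paper (for instance, the paper bounds $|C|$ by exhibiting an explicit injection $C\to[G,G]$, $y\mapsto yx^{-1}$, rather than invoking the class-$2$ commutator identity, but the content is the same).

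Where you genuinely diverge is in the degree computation. The paper uses Schur \emph{column} orthogonality on a non-central class $C$: since $|C|=p=|[G,G]|$, the identity $|G|/|C|-\sum_{\phi\notin\cL}|\phi(C)|^2=|\cL|=|G|/|[G,G]|$ forces every non-linear $\phi$ to vanish on $G\setminus Z(G)$, after which row orthogonality immediately gives $|G|=|Z(G)|\phi(e)^2$. Your route instead first establishes the inequality $\phi(e)^2\le|G:Z(G)|$ from faithfulness and Schur's lemma, and then forces equality by a global count: the number of non-linear irreducibles times the bound exactly matches the remaining squared-degree mass $|G|(p-1)/p$. Both arguments are short and elementary; the paper's version has the bonus of showing outright that non-linear characters vanish off the centre, while yours avoids column orthogonality entirely and is arguably more transparent about why equality must hold.
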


We will also need the following result, which the author was unable to find stated explicitly in the literature.

\begin{thm}[Folklore?]\label{t:is-dihedral}
Let $G$ be a finite JNA group which has trivial centre and has a conjugacy class of size $2$. Then $G$ is isomorphic to the dihedral group of order $2p$, for some odd prime~$p$.
\end{thm}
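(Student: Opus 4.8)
The plan is to exploit the structure imposed by the JNA hypothesis together with the existence of a conjugacy class of size $2$. First I would locate an element $x\in G$ whose conjugacy class $C$ has size $2$, so that the centralizer $C_G(x)$ has index $2$ in $G$; in particular $C_G(x)$ is a normal subgroup of $G$. Write $N = C_G(x)$. Since $G$ is JNA, every non-trivial normal subgroup contains $[G,G]$; applying this to $N$ gives $[G,G]\subseteq N$, which is automatic, but more usefully we can apply it to other normal subgroups we produce. The key observation is that $x\in Z(N)$ (since $N=C_G(x)$), and since $x\notin Z(G)$ (as $|C|=2>1$), the element $x$ is non-central in $G$ but central in the index-$2$ subgroup $N$. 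The conjugacy class of $x$ is $\{x, gxg^{-1}\}$ for any $g\in G\setminus N$; write $y = gxg^{-1}$, so $xy^{-1}$ (equivalently the commutator $[g,x]$) lies in $[G,G]$.

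Next I would pin down $[G,G]$. Since $G$ is JNA, $[G,G]$ is the unique minimal non-trivial normal subgroup; in particular $[G,G]$ is characteristically simple, hence either an elementary abelian $p$-group or a direct power of a non-abelian simple group. But $G$ has a subgroup of index $2$, so $G$ is not perfect, so $G/[G,G]$ is a non-trivial abelian group and $[G,G]\neq G$. I would next argue $[G,G]$ is abelian: consider the subgroup $N=C_G(x)$ of index $2$. Its derived subgroup $[N,N]$ is characteristic in $N$, hence normal in $G$; if $[N,N]\neq\{e\}$ then $[G,G]\subseteq [N,N]\subseteq N$, and one can push on this to get a contradiction with $x$ being central in $N$ but $[G,G]$ being the minimal normal subgroup — more cleanly, I would instead show directly that $N$ is abelian. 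Indeed $x\in Z(N)$ and $\gen{x}$ contains (a conjugate generating) $[G,G]$: since $[G,G]\subseteq N$ and $[G,G]$ is the minimal normal subgroup, and $\gen{x}^G$ (the normal closure of $x$) is a non-trivial normal subgroup hence contains $[G,G]$, while $\gen{x}^G = \gen{x,y} = \gen{x}[G,G]$ modulo checking... the cleanest route: $[G,G]$ is generated by $[g,x]=x^{-1}y$ together with its $G$-conjugates, and since $|C|=2$ these conjugates are limited. I would show $[G,G]$ is cyclic of prime order $p$: it is elementary abelian (being characteristically simple abelian), and since $G/C_G([G,G])$ embeds in $\operatorname{Aut}([G,G])$ while $G$ acts on $[G,G]$ with $C_G([G,G])\supseteq [G,G]$ of index at most... the size-$2$ class forces the action to be small. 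Once $|[G,G]|=p$ for an odd prime $p$ (odd because if $p=2$ then $[G,G]\subseteq Z(G)=\{e\}$ after a short argument, contradicting non-abelian), the structure $G = [G,G]\rtimes(G/[G,G])$ with $G/[G,G]$ acting faithfully (trivial centre) and the size-$2$ class forcing $|G/[G,G]|=2$ acting by inversion gives $G\iso D_{2p}$.

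The main obstacle I expect is the bookkeeping that forces $|[G,G]| = p$ and $|G:[G,G]| = 2$ simultaneously, rather than merely $[G,G]$ elementary abelian with a possibly larger complement. The leverage for this is precisely the size-$2$ conjugacy class: it makes $C_G(x)$ an abelian (or near-abelian) subgroup of index $2$, and an abelian subgroup of index $2$ in a non-abelian group $G$ forces $|G:Z(G)|$ to be small and $[G,G]$ to be cyclic (a standard fact: if $A\leq G$ is abelian of index $2$ then $[G,G]$ is isomorphic to a section of $A$ and $G/Z(G)$ has order dividing... ). Combining "$[G,G]$ cyclic" with "$[G,G]$ characteristically simple" gives $|[G,G]| = p$ prime immediately; then trivial centre forces the index-$2$ complement to act faithfully on $\Zahl/p$, hence by inversion, hence $p$ is odd and $G\iso D_{2p}$. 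I would isolate the statement "a non-abelian finite group with an abelian subgroup of index $2$ has cyclic derived subgroup" as the one external fact to invoke or quickly prove, and relegate the verification that $C_G(x)$ is actually abelian (using JNA-ness) to the body of the argument.
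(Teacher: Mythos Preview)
Your overall plan shares its skeleton with the paper's argument: you correctly locate the index-$2$ centralizer $N=C_G(x)$, you look at the element $r=xy^{-1}$ and note its $G$-conjugates are limited, and you aim to finish with the dihedral relations once $|[G,G]|=p$ and $|G:[G,G]|=2$ are in hand. But the step you yourself flag as the ``main obstacle'' --- forcing $N$ to be abelian, equivalently $|G:[G,G]|=2$ --- is not actually carried out, and your sketches for it do not close. The line ``$[N,N]$ is characteristic in $N$, hence normal in $G$; if nontrivial then $[G,G]\subseteq[N,N]$'' only yields $[N,N]=[G,G]$ in the bad case, which is no contradiction by itself (it just says $N$ is class-$2$ nilpotent with $[N,N]=[G,G]\subseteq Z(N)$). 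And your proposed detour through ``abelian subgroup of index $2$ $\Rightarrow$ cyclic $[G,G]$'' presupposes the very abelianness of $N$ you are trying to establish. The semidirect decomposition $G=[G,G]\rtimes(G/[G,G])$ is also asserted before you know the orders are coprime.

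The paper closes this gap with one device you are missing. First one shows directly (more simply than via characteristic simplicity) that $[G,G]=\langle r\rangle$ has prime order: since conjugation either fixes or swaps $a$ and $\bar a$, the conjugates of $r=\bar a a^{-1}$ are just $r^{\pm1}$, so $\langle r\rangle\lhd G$ and hence $[G,G]\subseteq\langle r\rangle$ by the JNA condition; minimality then forces $|r|$ prime. From $[G,G]\subseteq Z(N)$ (which you essentially have, since $x,y\in Z(N)$) it follows that the map $\theta:N\to[G,G]$, $\theta(h)=tht^{-1}h^{-1}$, is a genuine group homomorphism. Its kernel $K=N\cap C_G(t)$ is normal in $G$; if $K\neq\{e\}$ then $r\in[G,G]\subseteq K$, whence $r$ commutes with $t$ and with all of $N$, so $r\in Z(G)=\{e\}$, a contradiction. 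Thus $\theta$ is injective, giving $|N|\leq|[G,G]|$ and hence $N=[G,G]$. This single stroke simultaneously delivers ``$N$ abelian'' and ``$|G:[G,G]|=2$'', after which your endgame ($p$ odd since $r\notin Z(G)$, $trt^{-1}=r^{-1}$, and $t^2\in Z(G)=\{e\}$) matches the paper's.
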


To keep this article as self-contained as possible, the Appendix includes
elementary proofs of Lemma~\ref{l:baby JN2} and Theorem~\ref{t:is-dihedral}.
In the rest of this subsection we offer some remarks on just non-abelian groups to provide extra context for the reader, and to acknowledge the sources for the ideas behind Lemma~\ref{l:baby JN2} and Theorem~\ref{t:is-dihedral}. The reader who just wants to get on with the proof of Theorem~\ref{t:bound for JNA cases} can skip to Section~\ref{ss:AMZL-of-JNA}.

\begin{rem}[JN2 groups and JM groups]\label{r:JM-JN2-history}
If $G$ is a JNA group with non-trivial centre, then $\{e\}\subsetneq [G,G]\subseteq Z(G)$, so that $G$ is $2$-step nilpotent.
Finite nilpotent JNA groups were classified by M. F. Newman \cite{Newman_JN2gp}, who christened them finite \dt{JN2 groups}; notable examples are the extra-special $p$-groups for any given prime~$p$.
His earlier paper \cite{Newman_JMgp} gives a classification of finite \emph{solvable} JNA groups with trivial centre, calling them finite \dt{JM groups}; examples include the affine groups of finite fields (cf.\ Example~\ref{eg:affine}). In general, every finite JM group is necessarily of the form
$\bF\rtimes {\bf D}$ for some finite field $\bF$ and some subgroup ${\bf D} \leq \bF^\times$. This is a necessary condition and not a sufficient one: see \cite[Theorem 6.4]{Newman_JMgp} for the full classification.
\end{rem}

\begin{rem}[Looking for a proof of Theorem~\ref{t:is-dihedral}]
If we knew the hypotheses of Theorem~\ref{t:is-dihedral} force $G$ to be solvable, we could extract this result from Newman's classification of JM groups. However, doing so is somewhat cumbersome: one has to analyze the conjugacy classes in ${\bf F}\rtimes {\bf D}$ to show that $|{\bf D}|=2$, explain why $|{\bf F}|$ is an odd prime, and so on. Alternatively, once we know $G$ is solvable, we could appeal to the results of Isaacs and Passman that were mentioned earlier; but this seems to require some form of Frobenius's theorem on malnormal subgroups (\cite[Theorem 7.2]{Isaacs_CTbook} or \cite[Chapter~7, Theorem 8.6]{Cohn_ed2vol2}). In both cases, of course, we would still need a separate argument to explain why the conditions in Theorem~\ref{t:is-dihedral} imply that the group is solvable! This is one reason why we chose to give a self-contained proof in the Appendix.
\end{rem}

\begin{rem}[Groups with two character degrees]
The present paper also owes an indirect debt to work of Isaacs and Passman
 on ``groups with two character degrees''. Rather than try to pick out those parts of the original papers which are relevant to the present article, the reader can find a good exposition in Chapter 12 of \cite{Isaacs_CTbook}. In particular, the dichotomy theorem contained in \cite[Lemma 12.3]{Isaacs_CTbook} implies that every finite JM group is Frobenius with abelian complement and kernel, so that its ZL-amenability constant  can be calculated explicitly using the methods of \cite[\S3.2]{ACS_AMZL-2cd}.
In~fact, the idea of looking at ZL-amenability constants of JNA groups arose from reading about this work in \cite[Chapter 12]{Isaacs_CTbook}, while seeking to extend the results of \cite{ACS_AMZL-2cd}.
\end{rem}

\end{subsection}

\begin{subsection}{The ZL-amenability constant in the JNA cases}
\label{ss:AMZL-of-JNA}
To fully exploit Lemma~\ref{l:baby JN2} and Theorem~\ref{t:is-dihedral}, we will apply the main theorem of \cite{ACS_AMZL-2cd}, stated below for clarity.

\begin{thm}[{\cite[Theorem 2.4]{ACS_AMZL-2cd}}]\label{t:ACSformula}
Let $m$ be an integer greater than or equal to $2$, and suppose
 $G$ is a finite non-abelian group such that $\phi(e)\in\{1,m\}$ for all $\phi\in\Irr(G)$. Then
\[ \AMZL(G)-1 = 2(m^2-1)\sum_{C\in\Conj(G)} \frac{|C|}{|G|} \left( 1- \frac{|C|}{|[G,G]|} \right). \]
\end{thm}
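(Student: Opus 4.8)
The plan is to compute $\AMZL(G)$ head-on from the explicit formula~\eqref{eq:define AMZL}, using the hypothesis to pin down the character degrees completely. Write $S(C,D) = \sum_{\phi\in\Irr(G)} \phi(e)^2\phi(C)\overline{\phi(D)}$ for the inner sum appearing there. I would split $\Irr(G)$ into the set $L$ of \emph{linear} characters (degree $1$) and the set $M$ of characters of degree $m$. The linear characters are precisely those that factor through the abelian quotient $G/[G,G]$, so $|L| = |G:[G,G]|$ and each contributes $\phi(e)^2=1$, while each $\phi\in M$ contributes $\phi(e)^2=m^2$. (Non-abelianity guarantees $M\neq\emptyset$ and $|[G,G]|\geq 2$, so all divisions below are legitimate.)

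The heart of the calculation is two orthogonality identities. Full column orthogonality gives $\sum_{\phi\in\Irr(G)}\phi(C)\overline{\phi(D)} = \delta_{C,D}\,|G|/|C|$. On the other hand, viewing $L$ as the dual group of $G/[G,G]$ and applying column orthogonality in that abelian quotient yields $\sum_{\phi\in L}\phi(C)\overline{\phi(D)} = |G:[G,G]|$ when $C$ and $D$ lie in a common coset of $[G,G]$, and $0$ otherwise. Since $[G,G]$ is normal, the coset $g[G,G]$ depends only on the conjugacy class of $g$, so the relation ``$C$ and $D$ lie in a common coset of $[G,G]$'' (written $[C\equiv D]$) is well defined. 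Because $\sum_{\phi\in M}\phi(C)\overline{\phi(D)}$ is the full column sum minus the linear part, one has $S(C,D) = m^2\sum_{\phi\in\Irr(G)}\phi(C)\overline{\phi(D)} - (m^2-1)\sum_{\phi\in L}\phi(C)\overline{\phi(D)}$, and substituting the two orthogonality values gives the clean formula
\[ S(C,D) = m^2\,\delta_{C,D}\,\frac{|G|}{|C|} - (m^2-1)\,\frac{|G|}{|[G,G]|}\,[C\equiv D]. \]

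The one point that needs care is the absolute value in~\eqref{eq:define AMZL}. For off-diagonal terms ($C\neq D$) this is harmless, since $S(C,D)$ is then either $0$ or the single negative value $-(m^2-1)|G|/|[G,G]|$. For the diagonal terms I would observe that $S(C,C) = \sum_{\phi}\phi(e)^2|\phi(C)|^2 \geq 0$ \emph{automatically}, so $|S(C,C)| = S(C,C)$ with no sign ambiguity; pleasantly, this forces $|C|\leq \tfrac{m^2}{m^2-1}|[G,G]|$ for every class with no appeal to structure theory.

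Finally I would assemble the two contributions to $|G|^2\AMZL(G)$. The diagonal sum uses $\sum_C|C| = |G|$, and the off-diagonal sum uses the combinatorial identity $\sum_{[C\equiv D]}|C||D| = |G|\,|[G,G]|$, obtained by grouping conjugacy classes according to their coset of $[G,G]$: each of the $|G:[G,G]|$ cosets is a union of classes of total size $|[G,G]|$, contributing $|[G,G]|^2$. Collecting everything yields
\[ \AMZL(G) = (2m^2-1) - 2(m^2-1)\frac{\sum_C|C|^2}{|G|\,|[G,G]|}, \]
and rewriting $1 = \sum_C|C|/|G|$ converts this into the stated formula. The main obstacle here is bookkeeping rather than anything conceptual: the term $\sum_C|C|^2$ arises once from the diagonal and once as the $C=D$ correction to the coset sum, and since both enter with the \emph{same} coefficient they add rather than cancel, which is exactly what produces the factor $2(m^2-1)$.
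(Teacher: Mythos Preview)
Your argument is correct and complete. The splitting of $\Irr(G)$ into linear and degree-$m$ characters, the two column-orthogonality identities (one for $G$, one for the abelian quotient $G/[G,G]$), the sign analysis via $S(C,C)\geq 0$, and the coset-counting identity $\sum_{[C\equiv D]}|C||D|=|G|\,|[G,G]|$ all check out, and the final algebra is right.

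Note, however, that the present paper does not actually \emph{prove} this theorem: it is quoted verbatim from \cite[Theorem~2.4]{ACS_AMZL-2cd} and used as a black box. So there is no proof here to compare yours against. The only hint the paper gives is the remark in the proof of Proposition~\ref{p:dual bound for ASS} that the argument there ``is like the proof of \cite[Theorem 2.4]{ACS_AMZL-2cd}, only simpler''; that proposition also splits off the linear characters $\cL$ and exploits their column sums, which is exactly your strategy. So your approach is almost certainly in the same spirit as the original, but a definitive comparison would require consulting \cite{ACS_AMZL-2cd} directly.
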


The following calculation was done in \cite[Example 3.13]{ACS_AMZL-2cd} for extraspecial $p$-groups, but works in greater generality. For convenience we give the proof.

\begin{prop}\label{p:AMZL of JN2}
Let $G$ be a finite JNA group with non-trivial centre. Then there is a prime $p$ and an integer $k\geq 1$ such that
\begin{equation}\label{eq:AMZL-of-JN}
 \AMZL(G) -1 = 2\left(1-\frac{1}{p^{2k}}\right)\left(1- \frac{1}{p}\right). 
\end{equation}
In particular, $\AMZL(G)\geq 7/4$.
\end{prop}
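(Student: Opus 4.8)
The plan is to deduce Proposition~\ref{p:AMZL of JN2} by feeding the structural information from Lemma~\ref{l:baby JN2} into the character-sum formula of Theorem~\ref{t:ACSformula}. First I would invoke Lemma~\ref{l:baby JN2}: since $G$ is finite JNA with non-trivial centre, there is a prime $p$ with $|G|$ a power of $p$, with $|[G,G]|=p$, with every conjugacy class of size $1$ or $p$, and with every irreducible character of degree $1$ or $m\defeq |G:Z(G)|^{1/2}$. Because $G$ is non-abelian it has a non-linear irreducible character, so $m\geq 2$; and since $|G|$, hence $|G:Z(G)|$, is a power of $p$, we have $m=p^k$ for some integer $k\geq 1$. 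In particular $G$ meets the hypotheses of Theorem~\ref{t:ACSformula} for this value of $m$.

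Next I would apply Theorem~\ref{t:ACSformula} and evaluate the sum over $\Conj(G)$ by splitting according to $|C|$. A class with $|C|=p$ contributes $\frac{p}{|G|}\bigl(1-\frac{p}{|[G,G]|}\bigr)=\frac{p}{|G|}(1-1)=0$, so only the size-$1$ classes matter; there are exactly $|Z(G)|$ of these, each contributing $\frac{1}{|G|}\bigl(1-\frac1p\bigr)$. Hence the sum equals
\[ \frac{|Z(G)|}{|G|}\Bigl(1-\frac1p\Bigr)=\frac{1}{|G:Z(G)|}\Bigl(1-\frac1p\Bigr)=\frac{1}{m^2}\Bigl(1-\frac1p\Bigr)=\frac{1}{p^{2k}}\Bigl(1-\frac1p\Bigr), \]
so that Theorem~\ref{t:ACSformula} gives
\[ \AMZL(G)-1 = 2(m^2-1)\cdot\frac{1}{m^2}\Bigl(1-\frac1p\Bigr) = 2\Bigl(1-\frac{1}{p^{2k}}\Bigr)\Bigl(1-\frac1p\Bigr), \]
which is the asserted identity.

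For the final clause I would note that the right-hand side is strictly increasing in $p$ and in $k$, so it is minimized at $p=2$, $k=1$, where it takes the value $2\cdot\frac34\cdot\frac12=\frac34$; therefore $\AMZL(G)\geq 7/4$, with equality precisely when $|G:Z(G)|=4$ (as happens for the dihedral and quaternion groups of order $8$). There is no real obstacle in this argument: the only points needing care are checking that $m$ is a genuine integer at least $2$ (so that Theorem~\ref{t:ACSformula} applies and $k\geq 1$), which is immediate from Lemma~\ref{l:baby JN2} together with non-abelianness of $G$, and the small piece of bookkeeping showing that the size-$p$ conjugacy classes contribute nothing because $|[G,G]|=p$.
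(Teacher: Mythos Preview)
Your proposal is correct and follows essentially the same route as the paper: invoke Lemma~\ref{l:baby JN2} to obtain the prime $p$, the two character degrees $1$ and $m=p^k$, and the two conjugacy-class sizes $1$ and $p$; then plug into Theorem~\ref{t:ACSformula}, observe that the size-$p$ classes contribute nothing because $|[G,G]|=p$, sum the $|Z(G)|$ central classes, and finally minimize over $p$ and $k$. The only cosmetic difference is that you phrase the bookkeeping in terms of $m$ before substituting $m=p^k$, whereas the paper substitutes immediately; and your closing parenthetical about the equality case $|G:Z(G)|=4$ is extra commentary that the paper defers to Example~\ref{eg:when AMZL=7/4}.
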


\begin{proof}
Let $p$ be the prime from Lemma~\ref{l:baby JN2} and let $k$ satisfy $|G:Z(G)|^{1/2}=p^k$.  By the lemma every $\phi\in\Irr(G)$ has degree $1$ or $p^k$, so $k$ is a positive integer (strictly positive since $G$ is non-abelian). Moreover, Lemma~\ref{l:baby JN2} also implies
 \[ \frac{|C|}{|[G,G]|}  = 
 \begin{cases} p^{-1} & \quad\text{if $C\in Z(G)$} \\ 1 & \quad\text{if $C\in\Conj(G)\setminus Z(G)$.}
\end{cases} \]
Applying Theorem~\ref{t:ACSformula} yields
\[ \begin{aligned}
\AMZL(G)-1
 & = 2(p^{2k}-1) \sum_{C\in\Conj(G)} \frac{|C|}{|G|} \left( 1- \frac{|C|}{|[G,G]|}\right) \\
 & = 2(p^{2k}-1) |Z(G)| \cdot \frac{1}{|G|} \left( 1- \frac{1}{p}\right) \\
 & = 2\frac{(p^{2k}-1)}{|G:Z(G)|} \left(1-\frac{1}{p}\right) \\
 & = 2\left( 1- \frac{1}{p^{2k}}\right)\left(1-\frac{1}{p}\right).
\end{aligned} \]
To finish: note that the right-hand side of Equation~\eqref{eq:AMZL-of-JN} is an increasing function of $k$ and of~$p$. Taking $k=1$ and $p=2$ gives the minimum value $3/4$, as required.
\end{proof}

\begin{eg}[JNA groups achieving the minimal ZL-amenability constant]\label{eg:when AMZL=7/4}
The calculations in  \cite[Example 3.13]{ACS_AMZL-2cd} show that the only extraspecial $p$-groups with ZL-amenability constant equal to $7/4$ are the dihedral and quaternion groups of order~$8$.
However, for any $n\in\Nat$ the group denoted in \cite{Newman_JN2gp} by $M(2^n)$ is a JNA group of order $2^{n+2}$ whose centre has order $2^n$. The proof of  Proposition~\ref{p:AMZL of JN2} then yields $\AMZL(M(2^n)) = 7/4$.
\end{eg}

We now turn to JNA groups with trivial centre. If $G$ is such a group and it has a conjugacy class of size~$2$, then (Theorem~\ref{t:is-dihedral}) $G$ is isomorphic to the dihedral group $C_p\rtimes C_2$ for some odd prime~$p$.
As shown in \cite[Section 3.1]{ACS_AMZL-2cd}, Theorem~\ref{t:ACSformula} applies to $C_p\rtimes C_2$ and straightforward calculations yield
\begin{equation}\label{eq:dihedral-cases}
\AMZL(C_p\rtimes C_2)  = 1+3\left(1-\frac{1}{p}\right)^2 \geq 1+ 3\left(1-\frac{1}{3}\right)^2 = \frac{7}{3}.
\end{equation}

It remains to deal with JNA groups where every conjugacy class, except that of $e$, has size $\geq 3$. (Recall that this class includes all finite simple non-abelian groups!) Here, it is no longer clear that we can exploit the results of \cite{ACS_AMZL-2cd}, but instead we can turn to the results of Section~\ref{s:trivial-centre}.

\begin{prop}\label{p:generic}
Let $G$ be a finite group with at least two elements and trivial centre. Suppose that $G$ has no conjugacy classes of size $2$. Then $\AMZL(G)\geq \ASS(G) \geq 7/4$.
\end{prop}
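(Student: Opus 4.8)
The plan is to bound $\ASS(G)$ from below using Theorem~\ref{t:bound for trivial centre} and then push the resulting estimate past $7/4$ using the extra hypothesis that there are no conjugacy classes of size $2$. Since $G$ has trivial centre and at least two elements, it is non-abelian, so $s = s(G)$ is well-defined; the hypothesis says $s \geq 3$. Theorem~\ref{t:bound for trivial centre} gives
\[ \ASS(G) - 1 \geq \frac{s-1}{s+1}\cdot\frac{2s}{s+1}. \]
First I would observe that the right-hand side, viewed as a function of $s$, is increasing in $s$ (both factors increase as $s$ grows), so for $s\geq 3$ it is minimized at $s=3$, where it equals $\frac{2}{4}\cdot\frac{6}{4} = \frac{3}{4}$. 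Hence $\ASS(G) - 1 \geq 3/4$, i.e. $\ASS(G)\geq 7/4$, and combining with Proposition~\ref{p:ASS(non-abelian)} (which gives $\AMZL(G)\geq\ASS(G)$) yields the claim.

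The only point requiring a little care is the monotonicity assertion: one should check that $g(s) = \frac{(s-1)(2s)}{(s+1)^2}$ is increasing for $s\geq 2$. This is a routine one-variable computation --- one can rewrite $g(s) = 2 - \frac{6s+2}{(s+1)^2}$ (or simply differentiate), and note that $\frac{6s+2}{(s+1)^2}\to 0$ and is eventually decreasing, with the relevant comparison $g(3) = 3/4 > g(2) = 4/9$ confirming the behaviour on the integers we need. Since we only ever evaluate $g$ at integers $s\geq 3$, it suffices to note $g(3)=3/4$ and that $g$ is increasing on $[2,\infty)$; alternatively one can just check directly that $g(s)\geq 3/4$ for all integers $s\geq 3$ by clearing denominators, which reduces to $2s^2 - 2s \geq \tfrac{3}{4}(s+1)^2$, i.e. $5s^2 - 14s - 3 \geq 0$, which holds for $s\geq 3$.

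I do not anticipate a genuine obstacle here: the real content has already been packaged into Theorem~\ref{t:bound for trivial centre}, and this proposition is essentially the observation that excluding classes of size $2$ bumps the lower bound $s\geq 2$ up to $s\geq 3$, which is exactly enough to move the bound from $4/9$ (giving only $13/9 < 7/4$) up to $3/4$ (giving exactly $7/4$). The one thing worth flagging explicitly in the writeup is why $G$ having trivial centre and more than one element guarantees it is non-abelian --- a non-trivial abelian group has centre equal to itself --- so that $\Conj(G)$ does contain a non-trivial class and $s$ makes sense; after that the chain of inequalities $\AMZL(G)\geq\ASS(G)\geq 1 + g(s) \geq 1 + g(3) = 7/4$ closes the argument.
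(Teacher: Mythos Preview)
Your proposal is correct and follows essentially the same route as the paper: apply Theorem~\ref{t:bound for trivial centre}, observe that the hypotheses force $s\geq 3$, and evaluate the bound at $s=3$ to obtain $3/4$. The paper's only cosmetic difference is that it rewrites the bound as $2\bigl(1-\tfrac{2}{s+1}\bigr)\bigl(1-\tfrac{1}{s+1}\bigr)$, which makes the monotonicity in $s$ visually immediate and avoids the algebraic verification you sketch.
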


\begin{proof}
Let $s= \min \{ |C| \st C\in \Conj(G)\setminus\{e\} \}$. The conditions put on our group ensure that $s\geq 3$. Then, by Theorem~\ref{t:bound for trivial centre},
\[ \begin{aligned}
\ASS(G)-1
 & \geq \frac{s-1}{s+1} \frac{2s}{s+1} \\
 & = 2\left(1- \frac{2}{s+1}\right)  \left(1-\frac{1}{s+1}\right) \\
 & \geq  2\left(1- \frac{2}{4}\right)  \left(1-\frac{1}{4}\right) & = \frac{3}{4}\,,
\end{aligned} \]
as claimed.
\end{proof}

\begin{proof}[Proof of Theorem~\ref{t:bound for JNA cases}]
Combine Proposition~\ref{p:AMZL of JN2}, Proposition~\ref{p:generic} and the inequality~\eqref{eq:dihedral-cases}.
\end{proof}

\begin{rem}
When $G$ has trivial centre and has a conjugacy class of size $2$, the direct application of Theorem~\ref{t:bound for trivial centre} merely gives $\AMZL(G)\geq \ASS(G)\geq 13/9$, which is not good enough. This is why we had to deal with such cases separately (and also use the JNA condition, which is not used in Proposition~\ref{p:generic}).
\end{rem}

\end{subsection}

\end{section}

\begin{section}{A lower bound for perfect groups}\label{s:ASS(perfect)}
Since every non-abelian simple group is JNA, and since we proved Theorem~\ref{t:mainthm} by passing from the given non-abelian finite group to some JNA quotient, it is natural to wonder what can be said about the ZL-amenability constant of a finite simple group. In this section we obtain a lower bound that is far better than what we obtain from Theorem~\ref{t:bound for trivial centre}, although it is probably far short of the true infimum.
 In fact, our bound holds for all finite perfect groups, and our approach does not require any structure theory.

\begin{prop}[An alternative bound for $\ASS$]\label{p:dual bound for ASS}
Let $G$ be a finite group, and let $m=\min\{ \phi(e) \st \phi\in\Irr(G)\text{ and } \phi(e)>1\}$. Then
\[ \ASS(G)-1 \geq (m^2-1) \sum_{C\in\Conj(G)} \frac{|C|}{|G|}\left( 1 - \frac{|C|}{|[G,G]|} \right). \]
Equality holds if every non-linear irreducible character of $G$ has degree~$m$.
\end{prop}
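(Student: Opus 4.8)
The plan is to fold the ``$-1$'' into the defining sum for $\ASS(G)$ using the column orthogonality relations, and then to discard everything contributed by linear characters. Recall that for each $C\in\Conj(G)$ column orthogonality gives $\sum_{\phi\in\Irr(G)}\abs{\phi(C)}^2=|G|/|C|$, whence
\[ \sum_{C\in\Conj(G)}\frac{|C|^2}{|G|^2}\sum_{\phi\in\Irr(G)}\abs{\phi(C)}^2=\sum_{C\in\Conj(G)}\frac{|C|}{|G|}=1. \]
Subtracting this identity from Definition~\ref{d:ASS} gives the reformulation
\[ \ASS(G)-1=\sum_{C\in\Conj(G)}\frac{|C|^2}{|G|^2}\sum_{\phi\in\Irr(G)}\bigl(\phi(e)^2-1\bigr)\abs{\phi(C)}^2. \]

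Next I would observe that linear characters contribute nothing to this sum, since $\phi(e)^2-1=0$ for them, while every non-linear $\phi$ satisfies $\phi(e)^2-1\geq m^2-1$ by the definition of $m$. This is the only inequality in the argument, and it is an equality for every non-linear $\phi$ exactly when all non-linear irreducible characters of $G$ have degree $m$. We thus obtain
\[ \ASS(G)-1\geq(m^2-1)\sum_{C\in\Conj(G)}\frac{|C|^2}{|G|^2}\sum_{\phi\in\Irr(G),\,\phi(e)>1}\abs{\phi(C)}^2. \]

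To finish I would evaluate the inner sum. The linear characters of $G$ are precisely those inflated from the abelian quotient $G/[G,G]$; there are $|G:[G,G]|=|G|/|[G,G]|$ of them, and each takes only values of modulus $1$. Combining this with column orthogonality once more, for each $C\in\Conj(G)$,
\[ \sum_{\phi\in\Irr(G),\,\phi(e)>1}\abs{\phi(C)}^2=\frac{|G|}{|C|}-\frac{|G|}{|[G,G]|}=\frac{|G|}{|C|}\Bigl(1-\frac{|C|}{|[G,G]|}\Bigr). \]
Substituting this back and cancelling a factor $|G|/|C|$ against one of the two factors $|C|/|G|$ yields exactly the asserted bound. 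Since every other relation used is an equality, equality in the one estimate (applied termwise) propagates to the conclusion, which establishes the last sentence of the proposition.

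There is no genuine obstacle here: the whole proof is bookkeeping with the Schur orthogonality relations, with no appeal to structure theory. The only step that takes a moment's thought is the reformulation in the first paragraph --- spotting that the constant $1$ is itself of the form $\sum_C\frac{|C|^2}{|G|^2}\sum_\phi\abs{\phi(C)}^2$ is what allows the ``$-1$'' to be absorbed, so that the sole estimate needed afterwards is the trivial inequality $\phi(e)^2-1\geq m^2-1$ on non-linear degrees. (One can sanity-check the final formula on $G=S_3$, where both sides equal $2/3$.)
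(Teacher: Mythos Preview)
Your proof is correct and follows essentially the same approach as the paper's: both arguments use column orthogonality to handle the sum over $\phi$, the identification of the linear characters with $\widehat{G/[G,G]}$ (giving $|\cL|=|G|/|[G,G]|$ and $|\psi(C)|=1$ for linear $\psi$), and the single inequality $\phi(e)^2\geq m^2$ on non-linear degrees. The only difference is organisational: you subtract the ``$1$'' at the outset via the identity $\sum_C\frac{|C|^2}{|G|^2}\sum_\phi|\phi(C)|^2=1$, which makes the linear characters drop out immediately, whereas the paper keeps them in play and does an equivalent algebraic rearrangement using the $A$-matrix notation; the underlying computation is the same.
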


\begin{proof}
This is like the proof of \cite[Theorem 2.4]{ACS_AMZL-2cd}, only simpler; in some sense it is dual to the argument used in the proof of Proposition~\ref{p:BS-1st-step}.

As in Equation~\ref{eq:ignore phase}, let $A_{\phi,C}:= |G|^{-1}|C| |\phi(C)|^2$. Let $\veps$ denote the trivial character of $G$, sometimes called the \dt{augmentation character}, which takes the value $1$ on every conjugacy class. Then
\[
\ASS(G) =\sum_{C\in\Conj(G)} A_{\veps,C}\sum_{\phi\in \Irr(G)} \phi(e)^2A_{\phi,C}  \,.
\]

Let $\cL=\{ \phi \in\Irr(G)\st \phi(e)=1\}$.
Then for each $C\in\Conj(G)$,
\[ \begin{aligned}
\sum_{\phi\in\Irr(G)} \phi(e)^2 A_{\phi,C} 
& \geq \sum_{\phi\in\cL} A_{\phi,C}  + m^2 \sum_{\phi\in\Irr(G)\setminus\cL} A_{\phi,C} 
  & \text{(definition of $m$)} \\
 & = (1-m^2) \sum_{\phi\in\cL} A_{\phi,C} +  m^2 \sum_{\phi\in\Irr(G)} A_{\phi,C} \\
 & = (1-m^2)\sum_{\phi\in\cL} A_{\phi,C}  + m^2  &\text{(columns of $A$ sum to $1$)} \\
 & = (1-m^2) |\cL| A_{\veps,C}  + m^2  &\text{(definition of $\cL$).} \\
\end{aligned} \]

Multiplying this inequality by $A_{\veps,C}$ and summing over all $C$, 
\[ \ASS(G) \geq \sum_{C\in\Conj(G)} A_{\veps,C} \;+\; (m^2-1) \sum_{C\in\Conj(G)} A_{\veps,C}( 1- |\cL|A_{\veps,C}).\]
Since rows of $A$ sum to $1$,
 and $|\cL|$ equals the order of the quotient group $G/ [G,G]$, the result follows.
\end{proof}

The following result is surely folklore, but we include a proof since it is straightforward.

\begin{lem}\label{l:big conjugacy class}
Let $G$ be a finite group. Suppose it has a conjugacy class $C$ of size $|G|/2$. Then $K=G\setminus C$ is an index $2$ subgroup of $G$.
In particular, $G$ is not perfect.
\end{lem}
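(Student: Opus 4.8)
The plan is to show that a conjugacy class $C$ of size $|G|/2$ cannot consist of elements which, together with the identity, fail to form a subgroup; more precisely, I will prove directly that $K = G\setminus C$ is a subgroup. First I would observe that $e\in K$, so $K$ is non-empty, and that $|K| = |G|/2$, so it suffices to show $K$ is closed under multiplication (closure under inverses is then automatic since $K$ is finite, or one notes $C^{-1}=C$ because $|C^{-1}|=|C|=|G|/2$ forces $C^{-1}=C$, whence $K^{-1}=K$ as well). The key structural input is that $C$ is a single conjugacy class, so $K$ is a union of conjugacy classes and in particular is closed under conjugation; thus $K$ is normal as soon as it is a subgroup, but more usefully, $gKg^{-1}=K$ for every $g$.

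The cleanest route uses characters. Since $C$ is a conjugacy class of size $|G:C_G(x)| = |G|/2$ for $x\in C$, the centralizer $H:=C_G(x)$ has index $2$, hence is a normal subgroup, and $x\in H$ (an element centralizes itself). Now consider the non-trivial linear character $\lambda$ of $G$ with kernel $H$: it takes value $1$ on $H$ and $-1$ on $G\setminus H$. I claim $C\subseteq G\setminus H$ and $K = H$, which gives the conclusion immediately. To see $C\subseteq G\setminus H$: the conjugates of $x$ form $C$, and if any conjugate $gxg^{-1}$ lay in $H$, then since $H\unlhd G$ and $x\in H$ we would need to rule out the whole class lying in $H$; but $|C|=|G|/2 > |H|\ \text{is false}$ — they are equal — so I must argue differently. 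The right argument: $C$ is either entirely inside $H$ or entirely outside, since $H$ is normal and hence a union of classes. If $C\subseteq H$ then $|H|\geq |C|+1 = |G|/2+1 > |G|/2=|H|$, a contradiction. Hence $C\subseteq G\setminus H$, and since $|C| = |G\setminus H| = |G|/2$ we get $C = G\setminus H$, so $K = G\setminus C = H$ is an index $2$ subgroup, as claimed.

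Finally, to conclude $G$ is not perfect: $K$ is an index $2$ subgroup of $G$, automatically normal, with $G/K\iso C_2$ abelian and non-trivial. Therefore the quotient map $G\to G/K$ is a non-trivial homomorphism onto an abelian group, which forces $[G,G]\subseteq K\subsetneq G$, so $[G,G]\neq G$ and $G$ is not perfect.

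The main obstacle — really the only subtle point — is making sure the dichotomy "$C\subseteq H$ or $C\cap H=\varnothing$" is correctly justified and that the cardinality bookkeeping ($|C|=|G|/2$ versus $|H|=|G|/2$) is used to rule out $C\subseteq H$; everything else is routine. An alternative to invoking the centralizer is to argue purely combinatorially that a union of conjugacy classes containing $e$ and of size $|G|/2$ must be a subgroup, but routing through the index-$2$ centralizer $C_G(x)$ is the shortest path and avoids any case analysis.
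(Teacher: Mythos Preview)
There is a fatal error at the heart of your argument. You correctly write that $|G:C_G(x)| = |G|/2$ by orbit--stabilizer, but then conclude that $H := C_G(x)$ ``has index $2$''. These two statements contradict each other: the index of $C_G(x)$ is $|G|/2$, so its \emph{order} is $2$, not its index. Concretely, $C_G(x) = \{e,x\}$ and $x$ is an involution. This two-element subgroup is certainly not normal (its non-identity element has $|G|/2$ distinct conjugates), so there is no linear character of $G$ with kernel $H$. Everything downstream --- the dichotomy ``$C\subseteq H$ or $C\cap H=\varnothing$'' and the cardinality comparison --- is built on the false premise that $H$ is a normal index-$2$ subgroup, and the argument collapses entirely. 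Indeed, the existence of an index-$2$ subgroup is exactly the conclusion to be proved, so you cannot produce one for free from the centralizer.

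For comparison, the paper's proof starts from the same correct observation (every element of $C$ is a self-centralizing involution) but then proceeds combinatorially: it shows $C\cdot C\subseteq K$, that left or right translation by any $t\in C$ gives a bijection $C\leftrightarrow K$, and hence that $K\cdot K\subseteq K$ by writing $ab=(at)(tb)$ for $a,b\in K$. There is no shortcut via a pre-existing normal subgroup.
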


\begin{proof}
We divide the proof into several steps.

\para{Step 1} 
Let $t\in C$. Then its centralizer $Z_G(t)$ has order $|G|/|C|=2$. Since $\gen{t}\subseteq Z_G(T)$, $t$ is an involution and $Z_G(t)=\{1,t\}$.

\para{Step 2} If $x\in G$ is an involution, and $y\in C$ and $xy$ is an involution, then $x=e$ or $x=y$. 

\noindent[{\it Proof}\/:
we have $xy=(xy)^{-1}=y^{-1}x^{-1}=yx$, so $x\in Z_G(y)$; now use Step~1.]

\para{Step 3}
Let $x,y\in C$. Then $xy\in K$.

\noindent[{\it Proof}\/:
if $x=y$ then $xy=e$; while if $x$ and $y$ are distinct elements of $C$, then by Step~2, $xy$ cannot be an involution, so in particular $xy\notin C$.]

\para{Step 4}
Let $t\in C$. Then the functions $L_t, R_t:G\to G$ defined by $L_t(x)=tx$ and $R_t(x)=xt$ are involutive bijections $C\leftrightarrow K$.

\noindent[{\it Proof}\/:
by Step 3, $L_t(C)\subseteq K$; and since $t$ is an involution, $L_t$ is injective. So $|L_t(C)|\leq K$. But $|K| = |G\setminus C| = |C|$, hence $L_t:C\to K$ is surjective. The argument for $R_t$ is similar.]

\para{Step 5} $K$ is closed under multiplication.

\noindent[{\it Proof}\/:
let $a,b\in K$. Pick $t\in C$; by Step 4, $at,tb\in C$. Since $t$ is an involution, $ab=(at)(tb)\in C\cdot C \subseteq K$ by Step~3.]

\para{Final step} Since $C$ is the conjugacy class of an involution, it is closed under inversion. Hence $K=G\setminus C$ is also closed under inversion. Combined with Step 5, this shows $K$ is a subgroup of $G$; and since $K$ is the union of conjugacy classes, it is normal. Finally, since $G/K$ is abelian of order $2$, $G$ is not perfect.
\end{proof}

\begin{rem}
In Step 1 of the proof we showed, in effect, that $C$ consists of self-centralizing involutions. So if $t\in C$ then $\gen{t}$ is \emph{malnormal} in $G$, hence by Frobenius's theorem (\cite[Theorem 7.2]{Isaacs_CTbook} or \cite[Chapter~7, Theorem 8.6]{Cohn_ed2vol2}) it admits a complement, and Lemma~\ref{l:big conjugacy class} follows.
However, Frobenius's theorem remains quite a deep result and seems excessive in this instance.
\end{rem}

\begin{thm}\label{t:ASS(perfect)}
Let $G$ be a finite perfect group. Then $\ASS(G)\geq 3$.
\end{thm}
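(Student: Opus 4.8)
The plan is to apply Proposition~\ref{p:dual bound for ASS} and then find good lower bounds for the two ingredients it involves: the minimal nonlinear character degree $m$, and the ``defect sum'' $\sum_C \frac{|C|}{|G|}(1 - \frac{|C|}{|[G,G]|})$. Since $G$ is perfect we have $[G,G]=G$, so the latter simplifies to $\sum_{C\in\Conj(G)} \frac{|C|}{|G|}(1-\frac{|C|}{|G|}) = 1 - |G|^{-2}\sum_C |C|^2$. Thus Proposition~\ref{p:dual bound for ASS} gives $\ASS(G)-1 \geq (m^2-1)\bigl(1 - |G|^{-2}\sum_C |C|^2\bigr)$, and it remains to show the right-hand side is at least $2$.

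First I would pin down $m$. A perfect group has no nontrivial linear characters (since $\Irr(G/[G,G])$ is trivial), but also — and this is the relevant point — a perfect group cannot have a nonlinear irreducible character of degree $2$: a degree-$2$ representation $\rho$ would have $\rho(G)\subseteq SL_2(\Cplx)$ up to scalars (as $G$ is perfect, the determinant is trivial), and finite subgroups of $SL_2(\Cplx)$ are known to be solvable unless they involve $SL_2(\mathbb F_5)\cong 2.A_5$, whose image in $PSL_2(\Cplx)$ is $A_5$; chasing this through, any finite perfect subgroup of $SL_2(\Cplx)$ is $\cong SL_2(\mathbb F_5)$, which does have a faithful $2$-dimensional representation — so I must be slightly more careful. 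The cleanest route: the kernel of $\rho$ is normal, $G/\ker\rho$ is perfect and embeds (projectively) into $PGL_2(\Cplx)$, whose finite subgroups are cyclic, dihedral, $A_4$, $S_4$, $A_5$; the only perfect one is $A_5$, so $G/\ker\rho\cong A_5$. But $A_5$ has no faithful $2$-dimensional representation (its smallest faithful one has degree $3$), contradiction. Hence $m\geq 3$, so $m^2-1\geq 8$, and it suffices to prove $1 - |G|^{-2}\sum_C|C|^2 \geq 1/4$, i.e.\ $\sum_C |C|^2 \leq \tfrac34 |G|^2$.

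The inequality $\sum_{C\in\Conj(G)}|C|^2 \leq \tfrac34|G|^2$ is where the real work lies, and Lemma~\ref{l:big conjugacy class} is exactly the tool. Write $C_1=\{e\}$ and let $C_2$ be a largest conjugacy class, of size $c$. If $c \leq |G|/2$ then, crudely, $\sum_C |C|^2 = 1 + \sum_{C\neq\{e\}}|C|^2 \leq 1 + c\sum_{C\neq\{e\}}|C| = 1 + c(|G|-1) \leq 1 + \tfrac12|G|(|G|-1) < \tfrac34|G|^2$ once $|G|$ is not tiny (and perfect groups have order $\geq 60$, so this is fine). If instead $c > |G|/2$: a conjugacy class has size dividing $|G|$, so $c\geq |G|/2$ forces $c\in\{|G|/2, |G|\}$; the case $c=|G|/2$ is excluded by Lemma~\ref{l:big conjugacy class} (it would make $G$ non-perfect), and $c=|G|$ is impossible since then $G\setminus C$ would be a proper nonempty union of conjugacy classes of total size $1$, forcing $|G|=1$. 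Hence $c\leq |G|/3$ in the remaining case, and the crude bound above improves to $\sum_C|C|^2 \leq 1 + \tfrac13|G|(|G|-1) < \tfrac12|G|^2$. Either way the claim holds with room to spare.

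The main obstacle I anticipate is the degree-$2$ exclusion: getting $m\geq 3$ cleanly requires the classification of finite subgroups of $PGL_2(\Cplx)$ (or equivalently of $SO(3)$), which is standard but is a genuine external input — though an elementary and classical one, so it fits the paper's stated philosophy. Everything after that is elementary counting plus Lemma~\ref{l:big conjugacy class}, and the numerics ($8 \times \tfrac14 = 2$) close exactly, with the secondary size bounds giving slack. I would present the argument in the order: (i) reduce to the two inequalities via Proposition~\ref{p:dual bound for ASS} and $[G,G]=G$; (ii) prove $m\geq 3$; (iii) prove $\sum_C|C|^2\leq\tfrac34|G|^2$ by the case split on the largest class size, invoking Lemma~\ref{l:big conjugacy class}; (iv) combine.
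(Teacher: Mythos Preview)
Your argument has a genuine gap: the claim that every finite perfect group satisfies $m\geq 3$ is false. The binary icosahedral group $SL_2(\bF_5)$ (order $120$) is perfect and has two irreducible characters of degree~$2$. You nearly catch this yourself, but your ``more careful'' route via $PGL_2(\Cplx)$ does not work: the composite $G\to GL_2(\Cplx)\to PGL_2(\Cplx)$ has kernel equal to the preimage of the scalar matrices, which strictly contains $\ker\rho$ whenever $-I\in\rho(G)$. So $G/\ker\rho$ need not embed in $PGL_2(\Cplx)$; only its quotient by $\{\pm I\}\cap\rho(G)$ does. For $G=SL_2(\bF_5)$ with $\rho$ faithful of degree~$2$, the image in $PGL_2(\Cplx)$ is $A_5$, not $G$, and there is no contradiction. (The bound $m\geq 3$ \emph{does} hold for finite simple groups, as the paper notes in the remark following the theorem; perhaps that is what you were recalling.) With only $m\geq 2$ available, your inequality $8\cdot\tfrac14=2$ collapses to $3\cdot\tfrac14<2$, and the proof fails.

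The paper's proof uses only the trivial bound $m\geq 2$, but extracts more from the class-size side than you do. Since $|C|$ divides $|G|$, is strictly less than $|G|$, and cannot equal $|G|/2$ by Lemma~\ref{l:big conjugacy class}, one has $|C|\leq |G|/3$ for \emph{every} conjugacy class. Hence each factor $1-|C|/|G|$ in the defect sum is at least $2/3$, so the whole sum is at least $\tfrac23\sum_C |C|/|G|=2/3$, and $(m^2-1)\cdot\tfrac23\geq 3\cdot\tfrac23=2$. Your case split actually contains this observation (in the branch where you deduce $c\leq |G|/3$), but you relegate it to a secondary case and never apply it uniformly; doing so would repair your argument without any appeal to $m\geq 3$.
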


\begin{proof}
If $C\in\Conj(G)$ then $|C|\leq |G|/3$ (since $|C|$ is a proper factor of $|G|$, and it cannot be $|G|/2$ by Lemma~\ref{l:big conjugacy class}).
Applying Proposition~\ref{p:dual bound for ASS} and using $[G,G]=G$ yields
\[ \ASS(G) -1 \geq (m^2-1) \sum_{C\in\Conj(G)} \frac{|C|}{|G|} \left(1 - \frac{1}{3}\right) = \frac{2(m^2-1)}{3}\,.\]
Since $m\geq 2$ the result follows.
\end{proof}

\begin{rem}\label{r:simple-no-2dim-irrep}
If $G$ is simple one can improve this lower bound. A finite simple group has no irreducible $2$-dimensional representations over $\Cplx$ (see e.g.~\cite[Corollary 22.13]{JamLie});
hence $m\geq 3$ when $m$ is as defined in Proposition~\ref{p:dual bound for ASS}. Running the argument in the proof of Theorem~\ref{t:ASS(perfect)} then yields $\ASS(G)\geq 19/3$.

Moreover: since quotients of perfect groups are perfect, every finite perfect group $G$ has a quotient group $K$ which is non-abelian and simple. It follows that if $G$ is a finite perfect group,
\[ \AMZL(G)\geq \AMZL(K) \geq \ASS(K)\geq \frac{19}{3}\,.\]
While this argument does not imply $\ASS(G)\geq 19/3$ for all finite perfect groups, we know of no finite perfect group whose associated minorant is less than $19/3$. 
\end{rem}

\end{section}

\begin{section}{Questions for future investigation}
\label{s:closing}

Our first two questions concern the associated minorant $\ASS(G)$. Although it is technically easier to deal with than $\AMZL(G)$ for particular groups, we do not know if it has the same good hereditary properties.

\begin{qn}
Is there a finite group $G$ and $N\lhd G$ such that $\ASS(G) < \ASS(G/N)$?
\end{qn}

If the answer is negative, then we can obtain lower bounds on $\ASS(G)$ by passing to JNA quotients, and hence the next question would have a \emph{positive} answer.

\begin{qn}
Is $\inf\{ \ASS(G) \colon \text{$G$ non-abelian and finite} \}$ strictly greater than $1$?
\end{qn}

To motivate the next two questions, we present a small consequence of our earlier work on the JN2 cases.

\begin{thm}\label{t:nilpotent}
Let $G$ be a finite nilpotent group. If $\AMZL(G) < 59/27$ then $G$ is the product of a $2$-group with an abelian group.
\end{thm}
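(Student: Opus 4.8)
The plan is to reduce the statement to a single non-abelian Sylow subgroup and then apply Proposition~\ref{p:AMZL of JN2}. Since a finite nilpotent group is the internal direct product of its Sylow subgroups, write $G = \prod_p G_p$; then Corollary~\ref{c:AMZL-of-prod-and-quot}\ref{li:prod} gives $\AMZL(G) = \prod_p \AMZL(G_p)$, and since every factor is at least $1$ (Proposition~\ref{p:ASS(non-abelian)}) we get $\AMZL(G) \geq \AMZL(G_p)$ for each prime $p$. So it is enough to prove that whenever $p$ is odd and $G_p$ is non-abelian, one has $\AMZL(G_p) \geq 59/27$: granting this, the hypothesis $\AMZL(G) < 59/27$ forces every odd Sylow subgroup of $G$ to be abelian, and then $G = G_2 \times \prod_{p\ \mathrm{odd}} G_p$ exhibits $G$ as the product of a $2$-group with an abelian group.

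To carry out the key step, suppose $p$ is odd and $G_p$ is non-abelian. As observed just before Theorem~\ref{t:bound for JNA cases}, $G_p$ admits a JNA quotient $H$, and by Corollary~\ref{c:AMZL-of-prod-and-quot}\ref{li:quot} we have $\AMZL(G_p) \geq \AMZL(H)$. Being a quotient of a $p$-group, $H$ is a non-trivial $p$-group, so it has non-trivial centre and Proposition~\ref{p:AMZL of JN2} applies. Furthermore, the prime supplied by that proposition is, via Lemma~\ref{l:baby JN2}, the unique prime dividing $|H|$, namely $p$. Hence $\AMZL(H) - 1 = 2(1 - p^{-2k})(1 - p^{-1})$ for some integer $k \geq 1$; as this expression increases with $p$ and with $k$, and $p \geq 3$, $k \geq 1$, we obtain $\AMZL(H) - 1 \geq 2\bigl(1 - \tfrac{1}{9}\bigr)\bigl(1 - \tfrac{1}{3}\bigr) = \tfrac{32}{27}$, so $\AMZL(G_p) \geq \AMZL(H) \geq \tfrac{59}{27}$, as needed.

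I do not expect a genuine obstacle here: the argument is an assembly of Corollary~\ref{c:AMZL-of-prod-and-quot} and Proposition~\ref{p:AMZL of JN2}, using only the fact that nilpotent groups split as products of their Sylow subgroups. The one point deserving a moment's care is the identification of the prime appearing in Proposition~\ref{p:AMZL of JN2} with $p$, which is immediate once one recalls that Lemma~\ref{l:baby JN2} pins that prime down as the one dividing the order of the (necessarily $p$-power order) JNA group $H$. One could equally phrase the final estimate by noting that $2(1-p^{-2k})(1-p^{-1})$ attains its minimum over odd primes $p$ and positive integers $k$ at $p=3$, $k=1$.
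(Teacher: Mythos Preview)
Your proof is correct and follows essentially the same route as the paper's: decompose the nilpotent group into its Sylow factors, pass from any non-abelian odd Sylow to a JNA quotient, and invoke Proposition~\ref{p:AMZL of JN2} to get the $32/27$ bound. The only cosmetic difference is that you justify $\AMZL(G)\geq\AMZL(G_p)$ via multiplicativity (Corollary~\ref{c:AMZL-of-prod-and-quot}\ref{li:prod}) and the fact that each factor is at least~$1$, whereas the paper simply uses the quotient inequality; and you are slightly more explicit than the paper in identifying the prime from Lemma~\ref{l:baby JN2} with~$p$.
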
 

\begin{proof}
Since $G$ is nilpotent it factorizes as a direct product of finite $p$-groups, say $G=H_1\times\dots\times H_m$.
(More precisely, it is the direct product of its Sylow $p$-subgroups; see~e.g.\
\cite[Section 9.8, Theorem 5]{Cohn_ed2vol1}.)

Suppose at least one of these groups, say $H_1$, is a non-abelian $p$-group where $p\geq 3$. Then $H_k$ admits a JNA quotient $K$, and $K$ must also be a $p$-group; the proof of Proposition~\ref{p:AMZL of JN2} then implies that
\[ \AMZL(K)-1 \geq 2( 1- p^{-2})(1-p^{-1}) \geq 2\left(1-\frac{1}{9}\right)\left(1-\frac{1}{3}\right) = \frac{32}{27}\,. \]
But since $\AMZL(G)\geq\AMZL(H_1)\geq\AMZL(K)$, this contradicts the assumption on $\AMZL(G)$.
Thus each of the groups $H_1,\dots, H_m$ is either abelian or a $2$-group, and the result follows.
\end{proof}

\begin{qn}\label{q:small AMZL forces nilpotent}
Does Theorem~\ref{t:nilpotent} remain true if one drops the word ``nilpotent''?
\end{qn}

Note that a positive answer to Question~\ref{q:small AMZL forces nilpotent} would yield a positive answer to the next question.

\begin{qn}
Is $\inf\{\AMZL(G) \colon \text{$G$ a finite, non-nilpotent group}\}$ strictly greater than $7/4$? Can we at least show that a group with ZL-amenability constant $7/4$ has to be nilpotent?
\end{qn}

\begin{rem}
An explicit calculation shows that $\ASS(C_3\rtimes C_2)=5/3$. Therefore, with our current methods we have no hope of proving results of the form ``if $G$ has trivial centre then $\AMZL(G) \geq 2$'', because without the JNA condition the best we can do for groups with trivial centre is to use the lower bound provided by $\ASS(G)$.
\end{rem}

Moving away from questions of lower bounds, we could look at the structure of the set
\[ \cZL=\{ \AMZL(G) \colon \text{$G$ a finite group}\}. \]
  of possible values for ZL-amenability constants. 

\begin{qn}
Is $\cZL$ closed? If not, does it contain any non-isolated points?
\end{qn}

The results summarized in \cite[Section 3.4]{ACS_AMZL-2cd} show that there exist strictly increasing sequences in $\cZL$ which converge. The same results suggest that these limiting values are limit points of $\cZL$, and hence that $\cZL$ is not closed, but they are not conclusive.

\begin{qn}
Does $\cZL$ contain any strictly decreasing sequences? That is, does there exist a sequence $(G_n)$ of finite groups such that $\AMZL(G_{n+1})< \AMZL(G_n)$ for all $n$?
\end{qn}

Our final question connects the study of $\AMZL(G)$ back to the world of infinite-dimensional Banach algebras.
\begin{qn}
 Consider an inverse system of finite groups
\[ \dots G_3 \to G_2 \to G_1 \to G_0 \]
in which all connecting maps are surjective group homomorphisms.
 If $\sup_n\AMZL(G_n) < \infty$, is the inverse limit $\varprojlim_n G_n$ always virtually abelian?
\end{qn}

If the answer is positive, then as noted in \cite{AzSaSp}, we could prove the following: if $G$ is a profinite group which is not virtually abelian, then the centre of $L^1(G)$ is non-amenable.
\end{section}

\subsection*{Acknowledgements}
The main ideas needed for the proof of Theorem~\ref{t:mainthm} were originally worked out in 2013, while the author worked at the University of Saskatchewan.
 He thanks M. Alaghmandan and E. Samei for their interest and encouragement.

Other important parts, such as an early version of Theorem~\ref{t:is-dihedral}, were obtained while attending the {\it Conference in Abstract Harmonic Analysis}\/ held in Granada, May 2013. The author thanks the conference organizers for an engaging conference and a stimulating atmosphere.  During this time the author was supported by NSERC Discovery Grant 402153-2011.

The results of Section~\ref{s:ASS(perfect)} were obtained during the author's present position at Lancaster University, which is also where this paper took shape as various proofs were streamlined, rearranged and improved.
Thanks are due to the referee for an attentive reading and constructive feedback, in particular for drawing his attention to the reference cited in Remark~\ref{r:simple-no-2dim-irrep}.

\appendix

\begin{section}{The proofs of Lemma~\ref{l:baby JN2} and Theorem~\ref{t:is-dihedral}}
These proofs are included for the convenience of readers who are more familiar with harmonic analysis and Banach algebras than finite group theory.
To clarify the proof of Lemma~\ref{l:baby JN2}, it is helpful to divide the statement of the lemma into smaller pieces.

\para{Restatement of Lemma~\ref{l:baby JN2}}
{\it
Let $G$ be a finite JNA group which has non-trivial centre.
 Then there is a prime $p$ such that:
\begin{YCnum}
\item\label{li:order of G}
 $|G|$ is a power of $p$;
\item\label{li:order of [G,G]}
 $|[G,G]|=p$;
\item\label{li:conj class sizes}
 every conjugacy class has size $1$ or size $p$;
\item\label{li:2cd}
 every non-linear $\phi\in\Irr(G)$ has degree $|G:Z(G)|^{1/2}$.
\end{YCnum}
}

\para{\it Proof of \ref{li:order of G}}
Since $Z(G)$ is non-trivial, $G/Z(G)$ is abelian. So $G$ is $2$-step nilpotent. Hence there exist subgroups $H_1,\dots, H_s$ which each have prime power order, such that $G\iso H_1\times \dots \times H_s$. (More precisely, $G$ factorizes as the direct product of Sylow subgroups, see~e.g.\
\cite[Section 9.8, Theorem 5]{Cohn_ed2vol1}.)
At least one of these subgroups, say $H_1$\/, must be non-abelian; but then since $G$ quotients onto $H_1$, the JNA condition forces $G=H_1$.

\para{\it Proof of \ref{li:order of [G,G]}}
By \ref{li:order of G}, $G$ has order $p^N$ for some prime $p$ and some $N\geq 2$. So $|Z(G)|=p^m$ for some $1\leq m\leq N-1$. By the JNA condition $[G,G]$ is contained in every non-trivial subgroup of $Z(G)$. Hence $[G,G]$ is an abelian $p$-group with no non-trivial proper subgroups, so it must be cyclic of order~$p$.

\para{\it Proof of \ref{li:conj class sizes}}
Let $C\in\Conj(G)\setminus Z(G)$. Since $|C|$ divides $|G|$ it is a power of~$p$. On the other hand, there is an injection $C\to [G,G]$ (fix $x\in C$ and consider the map $C\to [G,G]$, $y\mapsto yx^{-1}$). By part~\ref{li:order of [G,G]}, $|[G,G]|=p$, and so $|C|=1$ or $|C|=p$.

\medskip

\para{\it Proof of \ref{li:2cd}}
Let $\cL$ denote the set of linear characters on $G$, and let $C\in\Conj(G)\setminus Z(G)$. By Schur orthogonality (column sums)
\[ \frac{|G|}{|C|} -  \sum_{\phi\in\Irr(G)\setminus\cL} |\phi(C)|^2 = \sum_{\psi\in \cL} |\psi(C)|^2 = |\cL| \,.\]
On the other hand: since any finite abelian group has the same cardinality as its dual group,
\[ |\cL| = |(G/ [G,G])^{\wedge} | =   |G/ [G,G] | =  \frac{|G|}{|[G,G]|} . \]
By \ref{li:order of [G,G]} and \ref{li:conj class sizes}, $|C|=p=|[G,G]|$. Combining this with the previous two identities forces
\[ \sum_{\phi\in\Irr(G)\setminus\cL} |\phi(C)|^2=0, \]
which in turn forces
 $\phi(C)=0$ for all $\phi\in \Irr(G)\setminus\cL$.

Now let $\phi\in  \Irr(G)\setminus\cL$. By the previous paragraph, $\phi(C)=0$ for all $C\in\Conj(G)\setminus Z(G)$. Hence, by Schur orthogonality (row sums),
\[ |G| = \sum_{x\in Z(G)} |\phi(x)|^2 = |Z(G)| \phi(e)^2 \,.\]
Rearranging yields the desired identity, and the proof of the lemma is complete.\hfill$\Box$

\medskip
Now we turn to Theorem~\ref{t:is-dihedral}, which we paraphrase as follows.

\para{Theorem~\ref{t:is-dihedral}, paraphrased}
{\it Let $G$ be a finite JNA group which has trivial centre and has a conjugacy class of size $2$. Then $G$ contains an involution $t$ and an element $r$ of odd prime order $p$, such that $|G|=2p$ and $rt=tr^{-1}$.}

\medskip
Originally the present author re-invented a proof of this result, but the argument which follows is streamlined slightly using some ideas from
Newman's paper~\cite{Newman_JMgp}.
To illustrate where we use the various hypotheses on $G$, the proof is broken up into several lemmas.

\begin{lem}\label{l:keep-or-flip}
Let $G$ be a finite group with a conjugacy class $\{a,\abar\}$ of size $2$. Then for each $g\in G$, conjugation by $g$ either fixes both $a$ and $\abar$ or it swaps them. In particular, $a$ and $\abar$ commute.
\end{lem}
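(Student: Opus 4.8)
The plan is to exploit the fact that a conjugacy class of size~$2$ is an extremely rigid object: conjugation permutes its two elements, so each $g\in G$ acts on $\{a,\abar\}$ as an element of the symmetric group $S_2$, and this action is a group homomorphism $G\to S_2$. First I would make this homomorphism explicit: define $\pi: G\to S_2 \cong \{\pm 1\}$ by declaring $\pi(g)=+1$ if $gag^{-1}=a$ (equivalently $g\in Z_G(a)$) and $\pi(g)=-1$ if $gag^{-1}=\abar$. One checks directly that $\pi$ is a homomorphism, since conjugation by $gh$ is conjugation by $g$ followed by conjugation by $h$ and the composite of two transpositions-or-identities in $S_2$ behaves multiplicatively. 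The kernel of $\pi$ is exactly $Z_G(a)$, which therefore has index~$1$ or~$2$; since $a$ has $2$ conjugates we are in the index~$2$ case.

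Next I would deduce the stated dichotomy for an \emph{arbitrary} $g\in G$ directly from the definition of~$\pi$: if $\pi(g)=+1$ then $g$ fixes $a$, and it also fixes $\abar$ because the only other element of the $2$-element class is forced to stay put (a permutation of a $2$-element set fixing one point fixes the other); if $\pi(g)=-1$ then $g$ sends $a$ to $\abar$, and correspondingly sends $\abar$ to $a$. So conjugation by any $g$ either fixes both $a,\abar$ or swaps them, which is the first assertion.

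For the final clause, that $a$ and $\abar$ commute: since $\abar$ is conjugate to $a$, there is some $g$ with $gag^{-1}=\abar$, so $\pi(g)=-1$. Now consider conjugation by $a$ itself. Clearly $a$ fixes $a$, so $\pi(a)=+1$, and by the dichotomy just proved conjugation by $a$ fixes $\abar$ as well, i.e.\ $a\abar a^{-1}=\abar$, which rearranges to $a\abar=\abar a$. Hence $a$ and $\abar$ commute.

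I do not expect any genuine obstacle here; the only mild subtlety is checking that $\pi$ really is a homomorphism and handling the degenerate possibility that $a=\abar$ — but that is excluded by the hypothesis that the class has size exactly~$2$, so $\pi$ is well-defined and surjective. The whole argument is elementary bookkeeping with the $S_2$-action, and the substance of the lemma lies entirely in the observation that a size-$2$ class forces such an action to exist.
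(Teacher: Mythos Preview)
Your proof is correct and follows essentially the same approach as the paper: the paper observes that a group action on a two-element set forces each element to act trivially or as the swap, and then applies this to the conjugation action of $a$ on $\{a,\abar\}$ to get commutativity. Your version is more explicit---you name the homomorphism $\pi:G\to S_2$ and spell out the case analysis---but the underlying argument is identical.
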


\begin{proof}
When a group acts on a set of size two, each element either acts trivially or swaps the two points. This gives us the first claim in the lemma.
The second claim follows by considering the conjugation action of $a$ on $\{a,\abar\}$.
\end{proof}

\begin{lem}\label{l:[G,G] is cyclic}
Let $G$ be a finite JNA group with a conjugacy class $\{a,\abar\}$ of size $2$.
Put $r=\abar a^{-1}\neq\{e\}$. Then $[G,G]=\gen{r}$, and the order of $r$ is prime. Moreover, $r$ has order $2$ if and only if it is central.
\end{lem}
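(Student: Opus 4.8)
The plan is to derive all three assertions from a single elementary computation: conjugation sends $r$ into $\{r,r^{-1}\}$, so $\gen{r}$ is normal, and then the JNA hypothesis forces it to coincide with $[G,G]$, forces $|r|$ to be prime, and pins down exactly when $|r|=2$.

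First I would fix $g\in G$ with $gag^{-1}=\abar$, which exists because $\abar$ lies in the conjugacy class of $a$. Then $r=\abar a^{-1}=gag^{-1}a^{-1}$ is a commutator, so $r\in[G,G]$ and hence $\gen{r}\subseteq[G,G]$. For the reverse inclusion, take an arbitrary $h\in G$: by Lemma~\ref{l:keep-or-flip}, conjugation by $h$ either fixes both $a$ and $\abar$, in which case $hrh^{-1}=\abar a^{-1}=r$, or it swaps them, in which case $hrh^{-1}=(h\abar h^{-1})(hah^{-1})^{-1}=a\abar^{-1}=r^{-1}$. Either way $hrh^{-1}\in\{r,r^{-1}\}\subseteq\gen{r}$, so $\gen{r}$ is a non-trivial normal subgroup of $G$. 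Since $G$ is JNA, every non-trivial normal subgroup contains $[G,G]$; hence $[G,G]\subseteq\gen{r}$, and therefore $[G,G]=\gen{r}$.

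Next I would show $r$ has prime order. Suppose $n=|r|$ is not prime and pick a divisor $d$ with $1<d<n$; set $H=\gen{r^{n/d}}$, a subgroup of $\gen{r}$ of order $d$. For every $h\in G$ we have $hr^{n/d}h^{-1}=(hrh^{-1})^{n/d}\in\{r^{n/d},r^{-n/d}\}\subseteq H$, so $H\unlhd G$; but $H$ is non-trivial and $H\subsetneq\gen{r}=[G,G]$, contradicting the JNA property. (Equivalently, $H$ is characteristic in the cyclic group $\gen{r}$, hence normal in $G$.) Thus $|r|$ is prime.

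Finally, for the ``moreover'' clause I would reuse the membership $hrh^{-1}\in\{r,r^{-1}\}$ valid for all $h\in G$. If $|r|=2$ then $r^{-1}=r$, so $hrh^{-1}=r$ for every $h$ and $r\in Z(G)$. Conversely, if $r\in Z(G)$, then the element $g$ fixed above satisfies $gag^{-1}=\abar\neq a$, so by Lemma~\ref{l:keep-or-flip} it must swap $a$ and $\abar$, giving $grg^{-1}=r^{-1}$; comparing with $grg^{-1}=r$ yields $r^2=e$, and $r\neq e$ forces $|r|=2$. The one observation that is not completely automatic is that $\gen{r}$ is normal; once that is in hand, everything else follows immediately from the defining property of JNA groups, so I do not expect any genuine obstacle.
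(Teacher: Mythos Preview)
Your proof is correct and follows essentially the same approach as the paper's: you establish $hrh^{-1}\in\{r,r^{-1}\}$ via Lemma~\ref{l:keep-or-flip}, deduce $\gen{r}\unlhd G$, and then let the JNA hypothesis do the rest. The only cosmetic differences are the order of steps (you prove $r\in[G,G]$ first, the paper proves normality first) and the phrasing of the prime-order argument (you argue by contradiction from a composite order, the paper picks a prime $p$ dividing $|r|$ and shows $r^p=e$); both are immaterial.
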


\begin{proof}
Let $g\in G$. By Lemma~\ref{l:keep-or-flip}, either $grg^{-1}=r$ or $grg^{-1} = a (\abar)^{-1}=(\abar)^{-1}a^{-1}=r^{-1}$. Thus the conjugation map $x\mapsto gxg^{-1}$ sends $r$ to $r^{\pm 1}$, and since $r^{-1}\in \gen{r}$ this shows $\gen{r}\lhd G$.
So by the JNA condition $[G,G]\subseteq\gen{r}$. On the other hand, let $g\in G$ be such that $gag^{-1} = \abar$; then $r= gag^{-1}a^{-1} \in [G,G]$. Thus $[G,G]=\gen{r}$.

Let $p$ be a prime which divides the order of $r$. Then $\gen{r^p} <\gen{r}=[G,G]$. Moreover $\gen{r^p}\lhd G$, since the only conjugates of $r^p$ are $r^p$ itself and $r^{-p}$, which both belong to $\gen{r^p}$. So by the JNA condition, $r^p=e$. Thus $r$ has order~$p$.

Finally: since the conjugacy class of $r$ is $\{r^{\pm 1}\}$, $r$~is central if and only if $r=r^{-1}$, i.e.~if and only if $r^2=e$.
\end{proof}

Choose some $t\in G$ such that $\abar=tat^{-1}$. Let $H=Z_G(a)=Z_G(\abar)$: this has index $2$ in $G$, so is normal, and $G=H\sqcup Ht$. Note that $\gen{r} \subseteq H$, by another application of Lemma~\ref{l:keep-or-flip}.

\begin{lem}\label{l:self-centralizing}
Let $G$ be as above, and suppose in addition that $Z(G)=\{e\}$. Then $H=[G,G]$.
\end{lem}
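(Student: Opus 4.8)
The plan is to prove that $H$ is abelian, and then to read off $H=[G,G]$ from a one-line homomorphism argument. Write $\tau$ for the automorphism $x\mapsto t^{-1}xt$ of $G$; since $H$ and $\langle r\rangle$ are normal in $G$, this restricts to automorphisms of each. I will use three easy facts. (i) Every element of $H=Z_G(a)=Z_G(\bar a)$ commutes with $a$ and with $\bar a$, so $a,\bar a\in Z(H)$ and hence $r=\bar a a^{-1}\in Z(H)$. (ii) As $Z(G)=\{e\}$ and $r\neq e$, the element $r$ is not central, so by Lemma~\ref{l:[G,G] is cyclic} its order $p$ is odd; also, by Lemma~\ref{l:keep-or-flip}, $grg^{-1}\in\{r,r^{-1}\}$ for every $g\in G$, and $r\neq r^{-1}$. (iii) Any $h\in H$ fixes $a$ under conjugation, hence (Lemma~\ref{l:keep-or-flip}) also fixes $\bar a$, hence fixes $r=\bar a a^{-1}$; thus $H\subseteq Z_G(r)$, and comparing orders ($\abs{Z_G(r)}=\abs{G}/2=\abs{H}$) gives $H=Z_G(r)$. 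Since $t\notin H$, it follows that $\tau(r)=r^{-1}$.

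First I would show $H$ is abelian, arguing by contradiction. If $H$ is non-abelian then $[H,H]$ is a non-trivial normal subgroup of $G$ (it is characteristic in $H\lhd G$), so the JNA hypothesis gives $[G,G]=\langle r\rangle\subseteq[H,H]$; but $[G,G]=\langle r\rangle$ is cyclic, so $G$ is metabelian and $[H,H]\subseteq[G,G]$. Hence $[H,H]=\langle r\rangle$, and together with $r\in Z(H)$ this forces $H$ to be nilpotent of class $2$. Moreover every commutator $h^{-1}t^{-1}ht$ with $h\in H$ lies in $[G,G]=\langle r\rangle$, so $\tau$ acts trivially on $H/\langle r\rangle$. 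Using that $H$ is class $2$, pick $x,y\in H$ with $[x,y]=r$ (some commutator is a non-trivial power of $r$, and replacing $y$ by a suitable power turns it into $r$); writing $\tau(x)=xz_1$, $\tau(y)=yz_2$ with $z_1,z_2\in\langle r\rangle\subseteq Z(H)$, the class-$2$ commutator identities give
\[ r^{-1}=\tau(r)=\tau([x,y])=[\tau(x),\tau(y)]=[xz_1,yz_2]=[x,y]=r, \]
so $r^2=e$, contradicting the oddness of $p$. Hence $H$ is abelian.

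It then remains to prove $H\subseteq[G,G]$, since $[G,G]=\langle r\rangle\subseteq H$ is already part of the hypotheses. As $H$ is abelian, $\tau^2$ (conjugation by $t^2\in H$) acts trivially on $H$, so $\tau$ is an involutory automorphism of $H$, and the map $\phi\colon H\to H$, $\phi(h)=h^{-1}\tau(h)$, is a group homomorphism. Its kernel is $\{h\in H: \tau(h)=h\}$, and any such $h$ commutes with $t$ and with $H$, hence with all of $G$, so lies in $Z(G)=\{e\}$. Thus $\phi$ is injective, hence surjective, so $H=\operatorname{im}\phi$. But $\phi(h)=h^{-1}t^{-1}ht\in[G,G]$ for every $h$, whence $H\subseteq[G,G]$ and therefore $H=[G,G]$.

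The genuinely delicate step is ruling out a non-abelian $H$: the metabelian structure forces $[H,H]=\langle r\rangle$ to lie inside $Z(H)$, and it is exactly the interaction of this class-$2$ structure with the fact — extracted from $Z(G)=\{e\}$ via Lemma~\ref{l:[G,G] is cyclic} — that $t$ inverts $r$ which yields the absurdity $r=r^{-1}$. Once $H$ is known to be abelian the remaining argument is short and routine.
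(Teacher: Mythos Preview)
Your proof is correct, but it takes a longer route than the paper's. The paper never pauses to show that $H$ is abelian. Instead it observes directly that $[G,G]=\langle r\rangle\subseteq Z(H)$ (your fact~(i)), and this alone is enough to make the commutator map $\theta(h)=tht^{-1}h^{-1}$ a homomorphism $H\to[G,G]$: writing $tht^{-1}=\theta(h)h$ and $tkt^{-1}=\theta(k)k$, one gets $\theta(hk)=\theta(h)\,h\theta(k)h^{-1}=\theta(h)\theta(k)$ since $\theta(k)\in\langle r\rangle$ commutes with $h$. The kernel $K=H\cap Z_G(t)$ is then shown normal in $G$; if nontrivial, the JNA condition forces $r\in K$, whence $r\in Z(G)$, a contradiction. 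So $\theta$ is injective and $|H|\le|[G,G]|$.

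Your argument reaches the same endpoint (an injective commutator map $H\to[G,G]$) but only after first proving $H$ abelian via the class-$2$ contradiction with $\tau(r)=r^{-1}$. That detour is valid and pleasant --- the interplay between $[H,H]=\langle r\rangle\subseteq Z(H)$ and the inversion action of $t$ on $r$ is a nice idea --- but it is not needed: the single fact $r\in Z(H)$ already carries the whole weight. What your approach buys is a more transparent second half (once $H$ is abelian, both the homomorphism property of $\phi$ and the triviality of $\ker\phi$ are immediate, with no further appeal to JNA). What the paper's approach buys is brevity: one short calculation replaces your entire first paragraph. Note also that the abelianness of $H$ is automatic \emph{a posteriori}, since $H=[G,G]=\langle r\rangle$ is cyclic.
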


\begin{proof}
By our previous remarks,  $[G,G] = \gen{r} \subseteq H$.
So it suffices to prove $|H|\leq |[G,G]|$, and we can do this by exhibiting an injection $H\to [G,G]$. (Note that this injection will not be a left inverse to the inclusion of sets $[G,G]\hookrightarrow H$.)

Consider the function $\theta: H\to [G,G]$ defined by $\theta(h)=tht^{-1}h^{-1}$. Since $[G,G] \subseteq Z_G(H)$,
\[
t(hk)t^{-1} = (tht^{-1})(tkt^{-1}) = \theta(h)h\theta(k)k=\theta(h)\theta(k)hk \quad\text{for all $h,k\in H$.}
\]
Hence $\theta(hk)=\theta(h)\theta(k)$ for all $h,k\in H$, i.e.~$\theta$ is a homomorphism.
Now consider the subgroup
\[ K=\ker(\theta)= \{h\in H \colon ht=th\} = H \cap Z_G(t).\]
Since $K$ is the kernel of a homomorphism $H\to G$, $K\unlhd H$, and so $K\lhd G$ since $G=H\cup Ht$ and $K\subseteq Z_G(t)$.
Suppose $K\neq\{e\}$. Then $K\supseteq [G,G]$, by the JNA condition, and so $r\in K$ by Lemma~\ref{l:[G,G] is cyclic}. But this would imply $rt=tr$, and since $r\in Z_G(H)$ it would follow from $G=H\cup Ht$ that $r\in Z(G)$, contradicting the assumption that $Z(G)=\{e\}$. Thus $K=\{e\}$, so $\theta$ is injective, as required.
\end{proof}

\begin{proof}[Finishing the proof of Theorem~\ref{t:is-dihedral}]
Let $G$ be finite JNA with trivial centre and a conjugacy class of size~$2$. Combining the previous lemmas, we obtain $r,t\in G$ such that:
$trt^{-1}=r^{-1}$;
 $r$ has odd prime order;
and $G=\gen{r,t}$ (this follows from Lemma~\ref{l:self-centralizing} and the fact $H=\gen{r}$).
It only remains to show that $t^2=e$.
Since $trt^{-1}=r^{-1}$, $t^2$ commutes with $r$, and hence with everything in $G=\gen{r,t}$. So $t^2\in Z(G)=\{e\}$, and the proof of Theorem~\ref{t:is-dihedral} is complete.
\end{proof}

\end{section}


\vfill
\noindent Yemon Choi

\noindent
Department of Mathematics and Statistics\\
Fylde College, Lancaster University\\
Lancaster, United Kingdom LA1 4YF

\medskip
\noindent
Email: \texttt{y.choi1@lancaster.ac.uk}

\end{document}